\newtheorem{question}{Question}[section]
\newtheorem{theorem}[question]{Theorem}
\newtheorem{corollary}[question]{Corollary}
\newtheorem{definition}[question]{Definition}
\newtheorem{proposition}[question]{Proposition}
\newtheorem{example}[question]{Example}
\title{Infinite games and cardinal properties of topological spaces}
\author{
Angelo Bella}
\address{
Department of Mathematics and Computer Science \\
University of Catania \\
Citt\'a universitaria\\  
viale A. Doria 6 \\
95125 Catania, Italy}
\email{bella@dmi.unict.it}
\author{
Santi Spadaro}
\address{
Department of Mathematics and Statistics\\
Faculty of Science and Engineering\\
York University \\
Toronto, ON\\
M3J 1P3 Canada}
\curraddr{Institute of Mathematics, Silesian University in Opava, Na Rybn\' i\v cku 626/1, 746 01 Opava, Czech Republic}
\email{santispadaro@yahoo.com}
\thanks{The second named author was supported by an INdAM-Cofund Outgoing fellowship.}
\subjclass[2000]{Primary: 54A25, 91A44; Secondary: 54D35, 54D10}
\keywords{Infinite games, Arhangel'skii Theorem, Lindel\"of, almost Lindel\"of, weakly Lindel\"of, relatively H-closed}
\begin{document}

\maketitle

\begin{abstract}
Inspired by work of Scheepers and Tall, we use properties defined by topological games to provide bounds for the cardinality of topological spaces. We obtain a partial answer to an old question of Bell, Ginsburg and Woods regarding the cardinality of weakly Lindel\"of first-countable regular spaces and answer a question recently asked by Babinkostova, Pansera and Scheepers. In the second part of the paper we study a game-theoretic version of cellularity, a special case of which has been introduced by Aurichi. We obtain a game-theoretic proof of Shapirovskii's bound for the number of regular open sets in an (almost) regular space and give a partial answer to a natural question about the productivity of a game strengthening of the countable chain condition that was introduced by Aurichi. As a final application of our results we prove that the Hajnal-Juh\'asz bound for the cardinality of a first-countable ccc Hausdorff space is true for almost regular (non-Hausdorff) spaces.
\end{abstract}

\section{Introduction}

One of the main inspirations for our work is the celebrated theorem of Arhangel'skii, which in 1969 solved a forty years old problem of Alexandroff and Urysohn.

\begin{theorem} \label{arhanthm}
(Arhangel'skii's Theorem) If $X$ is a first-countable Lindel\"of space, then $|X| \leq 2^{\aleph_0}$.
\end{theorem}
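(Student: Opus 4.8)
The plan is to run the classical \emph{closing-off} argument that proves the stronger Hausdorff inequality $|X| \le 2^{\chi(X) \cdot L(X)}$ and then specialize to $\chi(X) = L(X) = \aleph_0$. First I would fix, for every $x \in X$, a countable local base $\mathcal{V}_x$ (available by first countability). The aim is to manufacture a single closed set $F \subseteq X$ with $|F| \le 2^{\aleph_0}$ that is then \emph{forced} to be all of $X$, so that $|X| = |F| \le 2^{\aleph_0}$. Note that the Hausdorff separation axiom is genuinely needed here, and I would assume it as part of the hypothesis ``space'' in the intended generality.

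Next I would build, by transfinite recursion on $\alpha < \omega_1$, an increasing chain of closed sets $\{F_\alpha : \alpha < \omega_1\}$, each of cardinality at most $2^{\aleph_0}$, satisfying a closing-off condition: whenever $\mathcal{U}$ is a countable subfamily of $\bigcup\{\mathcal{V}_x : x \in \bigcup_{\beta<\alpha} F_\beta\}$ with $X \setminus \bigcup\mathcal{U} \neq \emptyset$, the set $F_\alpha$ is required to contain a point outside $\bigcup\mathcal{U}$. Since there are only $(2^{\aleph_0})^{\aleph_0} = 2^{\aleph_0}$ such countable families to anticipate at each stage, adjoining one witnessing point per family and then taking the closure keeps every $F_\alpha$ of size at most $2^{\aleph_0}$; here I would use that in a first-countable space the closure of a set of size $\le 2^{\aleph_0}$ still has size $\le 2^{\aleph_0}$, because closures are computed by convergent sequences and there are only $(2^{\aleph_0})^{\aleph_0} = 2^{\aleph_0}$ of them. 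Put $F = \bigcup_{\alpha<\omega_1} F_\alpha$. Two checks are then routine: $|F| \le 2^{\aleph_0} \cdot \aleph_1 = 2^{\aleph_0}$, and $F$ is closed because $\omega_1$ is regular and uncountable while every point has a countable base, so any point of $\overline{F}$ already lies in $\overline{F_\alpha} = F_\alpha \subseteq F$ for some $\alpha$.

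Finally I would show $F = X$, which is the crux. Suppose toward a contradiction that some $p \in X \setminus F$ exists. Using Hausdorffness, for each $x \in F$ choose $V_x \in \mathcal{V}_x$ with $p \notin \overline{V_x}$. The family $\{V_x : x \in F\}$ covers the closed, hence Lindel\"of, subspace $F$, so there is a countable $A \in [F]^{\le \aleph_0}$ with $\{V_x : x \in A\}$ already covering $F$. By regularity of $\omega_1$ the countable set $A$ is contained in some $F_\alpha$, so $\{V_x : x \in A\}$ is one of the countable families anticipated at stage $\alpha + 1$; since it misses $p$ it fails to cover $X$, whence the closing-off condition yields a point of $F_{\alpha+1} \subseteq F$ outside $\bigcup_{x \in A} V_x \supseteq F$, a contradiction. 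The main obstacle is organizing the recursion so that the closing-off clause quantifies over exactly the right countable covers, namely those assembled from base elements of points already present, and then matching that bookkeeping to the Lindel\"of subcover extracted at the end; the cardinality and closedness of $F$ are comparatively mechanical. As a backup I would keep in mind the elementary-submodel proof: take $M \prec H(\theta)$ with $X \in M$, $|M| = 2^{\aleph_0}$ and ${}^{\omega}M \subseteq M$, and argue $X \subseteq M$ by the same Lindel\"of-plus-Hausdorff reflection, with countable closure of $M$ playing the role of the closing-off step.
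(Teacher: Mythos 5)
Your proof is correct, but there is nothing in the paper to compare it against: Theorem \ref{arhanthm} is quoted as the classical 1969 result motivating the paper and is never proved there. What you give is the standard closing-off proof of the stronger inequality $|X| \le 2^{\chi(X)\cdot L(X)}$ for Hausdorff spaces, and the details check out: at each of the $\omega_1$ stages there are only $\mathfrak{c}^{\aleph_0}=\mathfrak{c}$ countable families to close off against; $F=\bigcup_{\alpha<\omega_1}F_\alpha$ is closed because first countability plus the regularity of $\omega_1$ traps any convergent sequence from $F$ inside a single $F_\alpha$; and the final reflection works because the countable subcover of the Lindel\"of closed set $F$ is indexed by a set $A$ contained in some $F_\alpha$, so the stage-$(\alpha+1)$ witness point contradicts the covering. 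Two points of precision are worth making. First, you rightly insist on Hausdorffness (the paper itself records that the $T_1$ case is still open), but the exact place it enters deserves a sentence: counting the $\mathfrak{c}$ convergent sequences bounds $|\overline{A}|$ only because Hausdorffness makes sequential limits unique; in a $T_1$ first-countable space a single sequence can converge to arbitrarily many points, as in Example \ref{sakaiexample} of the paper, where a countable set has closure of arbitrarily large cardinality. Second, with your form of the closing-off clause the weaker choice $p \notin V_x$ (available already in $T_1$ spaces) suffices for the final contradiction, so the genuinely load-bearing use of Hausdorff in your argument is the closure-size bound, not the separation $p \notin \overline{V_x}$. As for the paper's own methods: its game-theoretic variants of Arhangel'skii's theorem (Theorems \ref{scheepersandtall}, \ref{almostgamethm} and \ref{weaklygamethm}) replace your linear chain $\{F_\alpha : \alpha<\omega_1\}$ by a tree of points $x_f$ and covers $\mathcal{U}_f$ indexed by $f \in \bigcup_{\alpha<\omega_1}\omega^\alpha$, with the winning strategy for player two doing the job that Lindel\"of subcover extraction does in your argument; your route is the more elementary classical one on which those game-theoretic proofs are modeled.
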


The importance of this result is witnessed by the wealth and variety of generalizations that have been offered of it.

Very soon after the publication of his paper, Arhangel'skii himself asked whether Theorem $\ref{arhanthm}$ remains true if the condition of first-countability is relaxed to the requirement that all points are $G_\delta$. This turned out to be a difficult problem and it was eventually solved by Saharon Shelah in \cite{Sh} in the negative. Later on, Isaac Gorelic found a simpler counterexample. He constructed a model of ZFC where CH holds, $2^{\aleph_1}$ can be arbitrarily large and there exists a zero-dimensional Lindel\"of space with points $G_\delta$ and cardinality $2^{\aleph_1}$. However, it is still an open question whether the cardinality of a Lindel\"of Hausdorff space with points $G_\delta$ is always bounded by $2^{\aleph_1}$. Arhangel'skii proved that the cardinality of every Lindel\"of $T_1$ space of countable pseudocharacter is less than the first measurable cardinal, while Juh\'asz constructed examples of Lindel\"of $T_1$ spaces with countable pseudocharacter of arbitrarily large cardinality below the first measurable. However, it is still unknown whether the cardinality of every Lindel\"of first-countable $T_1$ space is bounded by the continuum.

Scheepers and Tall recently provided a new partial positive solution to Arhangel'skii's problem by using infinite games. To present it, we need to fix some notation. Let $\alpha $ be an ordinal and  $\mathcal{A}$ and  $\mathcal{B}$ be collection of sets.   The symbol $G^\alpha
_1(\mathcal{A}, \mathcal{B})$ (respectively, $G^\alpha _{fin}(\mathcal{A}, \mathcal{B})$) denotes  the game of length $\alpha $ played by two players 1 and 2  in the
following way:   at  the $\beta$-inning player 1 choose $\mathcal{U}_\beta \in \mathcal{A}$ and player 2 responds by taking an element $V_\beta \in \mathcal{U}_\beta$ (respectively, a finite set $\mathcal{V}_\beta \subseteq \mathcal{U}_\beta $). Player 2 wins if and only if   $\{V_\beta
:\beta<\alpha \}\in \mathcal{B}$ (respectively, $\bigcup \{\mathcal{V}_\beta:\beta<\alpha \}\in \mathcal{B}$).

Given a topological space $X$, we denote by $\mathcal{O}_X$ the set of all open covers of $X$. In \cite{ST} Scheepers and Tall focus on the game $G^{\omega_1}_1(\mathcal{O}_X, \mathcal{O}_X)$. It is evident that if the cardinality of $X$ does not exceed $\aleph_1$, then player two has a winning strategy in $G^{\omega_1}_1(\mathcal{O}_X, \mathcal{O}_X)$. It is also not hard to see that if $X=2^{\omega_1}$ then player one has a winning strategy in $G^{\omega_1}_1(\mathcal{O}_X, \mathcal{O}_X)$. One of the main results of Scheeper and Tall's paper \cite{ST} is the following variant of Arhangel'skii's Theorem.

\begin{theorem} \label{scheepersandtall}
(Scheepers and Tall, \cite{ST}) Let $X$ be a space of countable pseudocharacter such that player two has a winning strategy in $G^{\omega_1}_1(\mathcal{O}_X, \mathcal{O}_X)$. Then $|X| \leq 2^{\aleph_0}$.
\end{theorem}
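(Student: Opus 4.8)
The plan is to run an elementary submodel argument in the style of the modern proof of Arhangel'skii's Theorem, using the winning strategy of player two as a substitute for the Lindel\"of property. Fix a winning strategy $\sigma$ for player two in $G^{\omega_1}_1(\mathcal{O}_X,\mathcal{O}_X)$, and fix a neighbourhood assignment witnessing countable pseudocharacter: to each $x\in X$ associate a decreasing sequence $\{U_n(x):n\in\omega\}$ of open sets with $\bigcap_n U_n(x)=\{x\}$. Choose a large regular $\theta$ and an elementary submodel $M\prec H(\theta)$ of cardinality $2^{\aleph_0}$ which is closed under countable sequences ($[M]^{\omega}\subseteq M$), satisfies $\omega_1\subseteq M$, and contains $X$, $\tau$, $\sigma$, and the assignment $x\mapsto\langle U_n(x):n\in\omega\rangle$ as elements. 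Such an $M$ exists as the union of a continuous increasing $\omega_1$-chain of $\omega$-closed submodels of size continuum. Since $|M|=2^{\aleph_0}$, it suffices to prove that $X\subseteq M$.

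Suppose toward a contradiction that there is a point $p\in X\setminus M$. I would build, by recursion on $\beta<\omega_1$, a single play $\langle \mathcal{U}_\beta,V_\beta:\beta<\omega_1\rangle$ of $G^{\omega_1}_1(\mathcal{O}_X,\mathcal{O}_X)$ in which player two follows $\sigma$, maintaining two properties: (i) every move $\mathcal{U}_\beta$ of player one is an element of $M$; and (ii) $p\notin V_\beta$ for every $\beta$. Property (i) is what confines the play to $M$: at a countable stage $\beta$ the history $\langle \mathcal{U}_\gamma,V_\gamma:\gamma<\beta\rangle$ is a countable sequence of elements of $M$, hence belongs to $M$ by countable closure, so the response $V_\beta=\sigma(\langle\ldots\rangle^\frown\mathcal{U}_\beta)$ is again an element of $M$. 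Granting that the recursion can be carried out, property (ii) gives $p\notin\bigcup_{\beta<\omega_1}V_\beta$, so $\{V_\beta:\beta<\omega_1\}$ fails to cover $X$. This contradicts the fact that $\sigma$ is winning for player two, and the contradiction yields $X\subseteq M$, whence $|X|\le 2^{\aleph_0}$.

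The entire proof thus reduces to the single recursive step, which I expect to be the main obstacle: given a history $h\in M$ of countable length, produce an open cover $\mathcal{U}\in M$ such that the $\sigma$-response $\sigma(h^\frown\mathcal{U})$ omits $p$. This is the precise point at which countable pseudocharacter must enter, replacing the ``pass to a countable subcover lying in $M$'' step of the classical argument. My approach is to argue by contradiction inside the step: if every cover $\mathcal{U}\in M$ forced a response $\sigma(h^\frown\mathcal{U})$ containing $p$, I would feed $\sigma$ a carefully chosen sequence of covers drawn from the canonical family $\{U_n(x):x\in X\}\in M$ and its refinements $\{U_n(x):x\in X,\ n\ge k\}\in M$ (each a legitimate cover, since $x\in U_k(x)$), harvesting a countable family of open sets, all members of $M$ and all containing $p$. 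Such a countable family, presented as a sequence in $M$, has its intersection in $M$ as well; if that intersection could be driven down to $\{p\}$, then $\{p\}\in M$ and so $p\in M$, contradicting the choice of $p$ and establishing that the required good move exists.

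The delicate part, and the step I expect to demand the most care, is exactly this shrinking: ensuring that the sets extracted in the bad case pin $p$ down rather than merely accumulate around it. In a space of large tightness a family of neighbourhoods of $p$ need not meet in $\{p\}$, so here the decreasing refinement structure of the assignment $\{U_n(x)\}$ and the freedom in selecting player one's covers must be combined to force the intersection to collapse to a single point. Once the recursive step is secured, the argument closes as above, with the length-$\omega_1$ game and the countable closure of $M$ together keeping every response inside $M$ while the winning condition of $\sigma$ delivers the contradiction. It is worth noting that first-countability is never used beyond countable pseudocharacter, which is essential, since the conclusion must remain valid for spaces of uncountable tightness.
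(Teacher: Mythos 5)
Your overall architecture --- confining a single play of length $\omega_1$ to an $\omega$-closed elementary submodel $M$ of size $\mathfrak{c}$ --- is sound, and it is genuinely different from the argument this paper uses for the companion results (the theorem itself is quoted from \cite{ST}, but Theorems \ref{almostgamethm} and \ref{weaklygamethm} are proved here by building a tree of plays indexed by $\bigcup_{\alpha<\omega_1}\omega^\alpha$ and showing that the resulting set $D$ of trapped points, of size at most $\mathfrak{c}$, exhausts $X$). However, there is a genuine gap at exactly the point you flag: the recursive step. Your mechanism --- harvest responses $\sigma(h^\frown\mathcal{U})$ containing $p$ from covers $\mathcal{U}\in M$ and drive their intersection down to $\{p\}$ --- cannot be carried out with the tools you invoke. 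Every harvested response is an open set belonging to $M$ (it is definable from $\sigma$, $h$ and $\mathcal{U}\in M$), and, as you yourself note, any countable family of them has its intersection in $M$; so reaching intersection $=\{p\}$ is exactly as hard as refuting the bad case outright. Countable pseudocharacter controls $\bigcap_n U_n(x)$ only for a single fixed base point $x$; the responses you can extract from the canonical covers are sets $U_{n_k}(x_k)$ with varying base points $x_k$, all distinct from $p$ (the only $x$ with $\bigcap_n U_n(x)=\{p\}$ is $p$ itself, whose neighbourhood sequence need not lie in $M$). To collapse the intersection to $\{p\}$ you would need, for each $q\neq p$, some response omitting $q$, and the bad-case hypothesis --- which says only that every response contains $p$ --- provides no leverage whatsoever for excluding any other point; nothing prevents all the responses containing $p$ from also containing some fixed second point.

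The missing idea is precisely the paper's Claim 1 (the engine of the proofs of Theorems \ref{almostgamethm} and \ref{weaklygamethm}, and of the original argument in \cite{ST}): for every countable history $h$ of covers there is a point $x$ such that \emph{every} open neighbourhood $U$ of $x$ is realized, i.e.\ $\sigma(h^\frown\mathcal{U})=U$ for some open cover $\mathcal{U}$. Otherwise each $x$ would have a witnessing neighbourhood $U_x$ that is never realized, and the cover $\{U_x:x\in X\}$ must itself receive a response, which is some $U_y$ --- a contradiction. This statement is first order in the parameters $h,\sigma,X,\tau\in M$, so by elementarity such an $x$ can be found in $M$, and for every neighbourhood $U\in M$ of $x$ a realizing cover can be found in $M$. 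Since $x\in M$ while $p\notin M$, we have $x\neq p$, so countable pseudocharacter yields $n$ with $p\notin U_n(x)$; as the assignment and $x$ lie in $M$, so does $U_n(x)$, and a cover $\mathcal{U}\in M$ with $\sigma(h^\frown\mathcal{U})=U_n(x)$ is the good move you need --- no detour through collapsing intersections is necessary. With this lemma substituted for your shrinking argument, the rest of your proof (countable closure keeping histories in $M$, the losing length-$\omega_1$ play, $|M|=2^{\aleph_0}$) closes correctly and yields a legitimate alternative to the tree-of-plays construction.
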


Another important generalization of Arhangel'skii's Theorem was proposed by Bell, Ginsburg and Woods in 1978. Recall that a space $X$ is \emph{weakly Lindel\"of} if every open cover has a countable subcollection whose union is dense in $X$. The class of weakly Lindel\"of spaces is much more general than the class of Lindel\"of spaces and includes the class of all spaces with the countable chain condition.

\begin{theorem}
(Bell, Ginsburg and Woods, \cite{BGW}) Let $X$ be a weakly Lindel\"of first-countable normal space. Then $|X| \leq 2^{\aleph_0}$.
\end{theorem}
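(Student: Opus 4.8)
The plan is to imitate the closing-off proof of Arhangel'skii's Theorem, confining every use of normality to a single covering lemma. Throughout, \emph{normal} is understood to include $T_1$, so that $X$ is regular and Hausdorff; write $\mathfrak{c}=2^{\aleph_0}$ and fix for each $x\in X$ a countable decreasing neighbourhood base $\{B(x,n):n\in\omega\}$. The key lemma I would establish first is that normality upgrades weak Lindel\"ofness to closed sets: \emph{if $C\subseteq X$ is closed and $\{V_y:y\in C\}$ is a family of open sets with $y\in V_y$ for each $y$, then there is a countable $A\subseteq C$ with $C\subseteq\overline{\bigcup_{y\in A}V_y}$.}

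To prove this lemma, set $W=\bigcup_{y\in C}V_y\supseteq C$. Since $C$ and $X\setminus W$ are disjoint closed sets, normality provides an open $G$ with $C\subseteq G\subseteq\overline{G}\subseteq W$. Then $\{V_y:y\in C\}\cup\{X\setminus\overline{G}\}$ covers $X$, so weak Lindel\"ofness yields a countable $A\subseteq C$ with $\overline{\left(\bigcup_{y\in A}V_y\right)\cup(X\setminus\overline{G})}=X$. For any $z\in C\subseteq G$ the set $G$ is a neighbourhood of $z$ disjoint from $X\setminus\overline{G}$; intersecting an arbitrary neighbourhood of $z$ with $G$ therefore forces it to meet $\bigcup_{y\in A}V_y$, whence $z\in\overline{\bigcup_{y\in A}V_y}$ and $C\subseteq\overline{\bigcup_{y\in A}V_y}$. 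I expect this to be the main obstacle, and it is exactly the point where normality is indispensable: weak Lindel\"ofness alone only controls dense unions, and the displayed argument fails for merely regular spaces.

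Next I would build an increasing chain $\langle F_\alpha:\alpha<\omega_1\rangle$ of closed subsets with $|F_\alpha|\le\mathfrak{c}$, setting $F_\lambda=\overline{\bigcup_{\alpha<\lambda}F_\alpha}$ at limits; this keeps the sets closed and of size $\le\mathfrak{c}$, since in a first-countable Hausdorff space the closure of a set of size $\le\mathfrak{c}$ has size at most $\mathfrak{c}^{\aleph_0}=\mathfrak{c}$. At each successor stage I close off as follows: for every countable $A\subseteq F_\alpha$ and every $g\in\omega^A$, if $\overline{\bigcup_{y\in A}B(y,g(y))}\neq X$ then I adjoin to $F_{\alpha+1}$ a single point lying outside that closure. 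There are at most $\mathfrak{c}^{\aleph_0}\cdot\aleph_0^{\aleph_0}=\mathfrak{c}$ pairs $(A,g)$ to consider, so at most $\mathfrak{c}$ points are added and $|F_{\alpha+1}|\le\mathfrak{c}$. Let $F=\bigcup_{\alpha<\omega_1}F_\alpha$; then $|F|\le\mathfrak{c}$, and $F$ is closed because any sequence from $F$ converging to a point has countable range contained in some $F_\alpha$ (by the uncountable cofinality), and $F_\alpha$ is closed.

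It remains to show $F=X$. Suppose not and fix $p\in X\setminus F$. Using normality on the disjoint closed sets $F$ and $\{p\}$, choose an open $U$ with $F\subseteq U$ and $p\notin\overline{U}$; then, by regularity, for each $y\in F$ pick $m_y$ with $\overline{B(y,m_y)}\subseteq U$. The Lemma applied to $C=F$ and the cover $\{B(y,m_y):y\in F\}$ gives a countable $A\subseteq F$ with $F\subseteq\overline{\bigcup_{y\in A}B(y,m_y)}$. Since $\bigcup_{y\in A}B(y,m_y)\subseteq U$, its closure is contained in $\overline{U}$ and hence misses $p$, so it is a proper subset of $X$; therefore at the stage where the pair $(A,\,y\mapsto m_y)$ was treated a point $q\notin\overline{\bigcup_{y\in A}B(y,m_y)}$ was placed into $F$. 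But $q\in F\subseteq\overline{\bigcup_{y\in A}B(y,m_y)}$, a contradiction. Hence $F=X$ and $|X|\le|F|\le\mathfrak{c}$. Note that the recurring difficulty---that the closure of a countable union of basic neighbourhoods need not be the union of their closures---is circumvented only by the two appeals to normality, which is consistent with the regular case remaining open.
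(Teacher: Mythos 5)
Your proof is correct: the key lemma using normality to upgrade weak Lindel\"ofness to closed sets (separating $C$ from the complement of the union by an open $G$ and covering with $\{V_y\}\cup\{X\setminus\overline{G}\}$), followed by the Arhangel'skii-style closing-off chain of length $\omega_1$, is exactly the Bell--Ginsburg--Woods argument. The paper itself states this theorem without proof, citing \cite{BGW}, and your route matches that original proof, including the correct isolation of where normality is indispensable.
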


The question of whether normality can be relaxed to regularity in the above theorem is still open.

\begin{question} \label{bellginsburgwoods}
(\cite{BGW}) Is the cardinality of every regular first-countable weakly Lindel\"of space bounded by the continuum?
\end{question}

We will offer a partial answer to this question by using a game-theoretic variant of weak Lindel\"ofness. 

Other attempts have focused on extending Arhangel'skii's Theorem to the realm of non-regular spaces. 

Recall that a space $X$ is called \emph{$H$-closed} if every open cover has a finite subfamily whose union is dense in $X$ and is called \emph{almost Lindel\"of} if for every open cover $\mathcal{U}$ of $X$ there is a countable collection $\mathcal{V} \subset \mathcal{U}$ such that $\bigcup \{\overline{V}: V \in \mathcal{V} \}=X$. It is easy to realize that every regular almost Lindel\"of space is Lindel\"of and every regular $H$-closed space is compact, but these are strict weakenings of Lindel\"ofness in the non-regular realm. Obviously, every $H$-closed space is almost-Lindel\"of. Recall that a space $X$ is called \emph{Urysohn} if for every pair of distinct points $x,y \in X$ there are open neighbourhoods $U$ of $x$ and $V$ of $y$ such that $\overline{U} \cap \overline{V}=\emptyset$.

\begin{theorem} \label{bellacammaroto}
(Bella and Cammaroto, \cite{BC}) Let $X$ be an almost Lindel\"of Urysohn first-countable space. Then $|X| \leq 2^{\aleph_0}$.
\end{theorem}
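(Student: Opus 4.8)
The plan is to run an elementary-submodel version of the Pol--Šapirovskii closing-off argument, using first countability to control closures and cardinalities, Hausdorffness (a consequence of the Urysohn property) to keep sequential limits inside the model, and the almost Lindel\"of property together with the full strength of Urysohn separation to force the model to exhaust $X$. First I would fix, for a large regular $\theta$, an elementary submodel $M \prec H(\theta)$ with $X \in M$, $|M| = 2^{\aleph_0}$, $2^{\aleph_0} \subseteq M$ and ${}^{\omega}M \subseteq M$; such an $M$ exists because $(2^{\aleph_0})^{\aleph_0} = 2^{\aleph_0}$, so one builds it as the union of an increasing $\omega_1$-chain of models of size $2^{\aleph_0}$, each absorbing the countable sequences of its predecessor. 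The goal is $X \subseteq M$, which immediately gives $|X| \leq |M| = 2^{\aleph_0}$. The first genuine step is to show that $X \cap M$ is closed: if $x \in \overline{X \cap M}$, then by first countability there is a sequence $s \in {}^{\omega}(X \cap M)$ converging to $x$; since ${}^{\omega}M \subseteq M$ we have $s \in M$, so $M$ models that $s$ has a limit, and by elementarity that limit lies in $M$, while the Urysohn (hence Hausdorff) property makes it unique and equal to $x$. Thus $x \in M$.

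It then remains to prove $X \cap M = X$. Suppose not and fix $p \in X \setminus M$; since $X \cap M$ is closed, $p \notin \overline{X \cap M}$. For each $x \in X \cap M$ the countable base at $x$ lies in $M$, and by the Urysohn property I can pick from it an open set $V(x,n_x)$ with $p \notin \overline{V(x,n_x)}$ (indeed with a whole neighbourhood of $p$ disjoint from $\overline{V(x,n_x)}$); note that each individual set $V(x,n_x)$ belongs to $M$, even though the assignment $x \mapsto V(x,n_x)$ does not. Adjoining $X \setminus (X \cap M)$ to $\{V(x,n_x) : x \in X \cap M\}$ produces an open cover of $X$, to which I apply almost Lindel\"ofness, extracting a countable subfamily the union of whose closures is $X$. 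Here the decisive feature of the definition is that it is the union of the \emph{individual} closures $\overline{V}$, not the closure of the union, that equals $X$: if the extracted subfamily avoided $X \setminus (X \cap M)$, then $p$ would have to lie in one of countably many sets $\overline{V(x,n_x)}$, each of which omits it by construction, which is absurd. So in this case $X \cap M$ is already exhausted and we are done.

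The hard part will be the remaining case, in which the extracted countable subfamily genuinely needs the set $X \setminus (X \cap M)$, whose closure legitimately contains $p$: then the boundary points of $X \cap M$ may be covered only through $\overline{X \setminus (X \cap M)}$, so no contradiction about $p$ appears. The assertion that this does not happen --- equivalently, that $X \cap M$ is dense in every open set coded in $M$ --- reduces, for the open set $X$ itself, to a restatement of the theorem, so it cannot simply be assumed. This is precisely the gap between the almost Lindel\"of property and its relativization to closed subspaces, and it is where the Urysohn hypothesis must be used in an essential way rather than merely to separate $p$ from single points. I expect to close it by exploiting the identity $\bigcap_{n} \overline{V(p,n)} = \{p\}$ for a decreasing base $\{V(p,n)\}$ at $p$, which again follows from Urysohn: the idea is to run the extraction not once but against a controlled family of neighbourhoods shrinking to $p$, amalgamate the resulting countably many countable index sets into a single countable $S \subseteq X \cap M$, and arrange that the union of the associated closures captures $X \setminus \{p\}$ while omitting $p$. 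Since such a configuration is coded by data in $M$, elementarity should then reflect it into $M$ and produce an element of $X \cap M$ outside that union; chasing the reflection, the only candidate is $p$ itself, forcing $p \in M$ and the desired contradiction. Making the covering of $X \setminus (X \cap M)$ compatible with keeping all indices inside $M$ is the delicate point on which the whole argument turns.
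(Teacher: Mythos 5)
Your proposal is not yet a proof, and the gap is exactly where you flag it; worse, the ``easy case'' you dispose of first is vacuous. Since every set $\overline{V(x,n_x)}$ with $x \in X \cap M$ omits $p$ by construction, while the closures of the extracted countable subfamily must cover $p$, that subfamily \emph{always} has to include $X \setminus (X \cap M)$, whose closure contains $p$. So the entire burden of the theorem rests on your final sketch (``I expect to close it by\dots''), which is a plan, not an argument: you never produce the configuration to be reflected, and the amalgamation-over-shrinking-neighbourhoods idea does not by itself control how $X \setminus (X\cap M)$ gets covered. (For the record, the paper under review does not prove this statement at all --- it imports it from [BC] --- so the relevant comparison is with the known closing-off proof of Bella and Cammaroto.)

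The missing idea is the \emph{$\theta$-closure}: what you need is not that $X \cap M$ is closed but that it is $\theta$-closed, i.e.\ every $y \in X \setminus (X \cap M)$ has an open neighbourhood $U_y$ with $\overline{U_y} \cap (X \cap M) = \emptyset$. This is provable from your hypotheses on $M$ and is where Urysohn enters essentially: if every closed neighbourhood of $y$ meets $X \cap M$, pick $a_n \in \overline{V(y,n)} \cap X \cap M$ for a decreasing base at $y$; then $y$ is the \emph{unique} point all of whose closed neighbourhoods contain a tail of $(a_n)$ (two such points would have disjoint closed neighbourhoods, each containing a tail), and since $(a_n) \in {}^{\omega}M \subseteq M$, elementarity puts $y \in M$. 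Closedness, which is all you established, is strictly weaker than $\theta$-closedness in non-regular Urysohn spaces and genuinely does not suffice. With $\theta$-closedness in hand, replace your single cover element $X \setminus (X \cap M)$ by the family $\{U_y : y \in X \setminus (X\cap M)\}$ and apply almost Lindel\"ofness to $\{V_x : x \in X \cap M\} \cup \{U_y : y \in X \setminus (X \cap M)\}$: in the extracted countable subfamily the closures $\overline{U_y}$ miss $X \cap M$, so the countably many sets $V_{x_n} \in M$ satisfy $X \cap M \subseteq \bigcup_n \overline{V_{x_n}}$ while $p \notin \overline{V_{x_n}}$ for every $n$; since $(V_{x_n})_n \in {}^{\omega}M \subseteq M$, elementarity reflects any point of $X \setminus \bigcup_n \overline{V_{x_n}}$ into $X \cap M$, which is impossible, so $\bigcup_n \overline{V_{x_n}} = X$, contradicting that $p$ is omitted. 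This is precisely the mechanism of the Bella--Cammaroto closing-off argument (where one takes $\theta$-closures at each stage, using that $|\mathrm{cl}_\theta(A)| \leq |A|^{\aleph_0}$ in first-countable Urysohn spaces), transposed to your submodel setting; without it your proof does not close.
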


\begin{theorem} \label{gryzlov}
(Gryzlov, \cite{G}) If $X$ is a first-countable $H$-closed space then $|X| \leq 2^{\aleph_0}$.
\end{theorem}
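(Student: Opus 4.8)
The plan is to adapt the classical closing-off argument behind Arhangel'skii's Theorem, replacing the Lindel\"of property by $H$-closedness and the ordinary closure by the \emph{$\theta$-closure}. Fix for each $x \in X$ a decreasing countable local base $\{B_n(x) : n \in \omega\}$, and recall that since $X$ is first-countable the $\theta$-closure $\operatorname{cl}_\theta(A) = \{x : \overline{B_n(x)} \cap A \neq \emptyset \text{ for all } n\}$ is witnessed by $\theta$-convergent sequences. First I would build an increasing chain $\langle F_\alpha : \alpha < \omega_1 \rangle$ of subsets of $X$, with unions taken at limit stages, each of size at most $2^{\aleph_0}$, subject to two requirements imposed at each successor stage: (i) whenever $A \in [F_\alpha]^{\leq \omega}$ and $g : A \to \omega$ satisfy $\bigcup_{y \in A} \overline{B_{g(y)}(y)} \neq X$, put into $F_{\alpha+1}$ a point lying outside this union; and (ii) close $F_{\alpha+1}$ under $\theta$-limits of sequences drawn from $F_\alpha$. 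A bookkeeping count shows there are at most $2^{\aleph_0}$ relevant pairs $(A,g)$, so the bound $|F_\alpha| \leq 2^{\aleph_0}$ is preserved. Setting $H = \bigcup_{\alpha < \omega_1} F_\alpha$, the regularity of $\omega_1$ together with first-countability should make $H$ both $\theta$-closed (hence closed) and of size at most $2^{\aleph_0}$.

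The heart of the argument is to show $H = X$. Suppose toward a contradiction that some $x \in X \setminus H$ exists. Using only Hausdorffness, for each $y \in H$ I would pick a basic neighbourhood $V_y = B_{g(y)}(y)$ with $x \notin \overline{V_y}$. Since $H$ is $\theta$-closed, every $z \in X \setminus H$ satisfies $z \notin \operatorname{cl}_\theta(H)$, so I may pick a basic neighbourhood $W_z$ of $z$ with $\overline{W_z} \cap H = \emptyset$. The family $\{V_y : y \in H\} \cup \{W_z : z \in X \setminus H\}$ is then an open cover of $X$, so $H$-closedness yields a finite subfamily whose closures cover $X$, say with indices $y_1, \dots, y_k \in H$ and $z_1, \dots, z_m \in X \setminus H$. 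Because each $\overline{W_{z_j}}$ misses $H$, every point of $H$ must lie in $\bigcup_{i \leq k} \overline{V_{y_i}}$, while this same finite union misses $x$. Taking $A = \{y_1, \dots, y_k\} \subseteq F_\alpha$ for a suitable $\alpha < \omega_1$ and applying requirement (i) to $A$ and $g \restriction A$, I obtain a point of $H$ outside $\bigcup_{i \leq k} \overline{V_{y_i}}$, contradicting the fact that this union contains all of $H$. Hence $H = X$ and $|X| \leq 2^{\aleph_0}$.

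The step I expect to be the genuine obstacle is requirement (ii): guaranteeing that $H$ may be taken $\theta$-closed while keeping $|H| \leq 2^{\aleph_0}$. In a first-countable space the $\theta$-closure is generated by $\theta$-convergent sequences, but under mere Hausdorffness such a sequence may $\theta$-converge to many distinct points, so $\operatorname{cl}_\theta$ of a countable set can be large and the cardinality count in (ii) threatens to collapse. This is precisely where the Urysohn hypothesis enters in Theorem \ref{bellacammaroto}: there $\theta$-limits are unique, $\theta$-convergent sequences behave like ordinary convergent ones, and the closing-off under (ii) adds only $2^{\aleph_0}$ points per stage. To recover Gryzlov's Urysohn-free conclusion I would instead try to avoid manufacturing a full $\theta$-closure in advance, invoking $H$-closedness locally and exploiting the \emph{finiteness} of the dense subfamilies it supplies to select, around each point of the complement, finitely many closed neighbourhoods separating it from $H$ without an a priori cardinality blow-up. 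Making this substitution legitimate is, I believe, the essential difficulty that Gryzlov's argument must overcome.
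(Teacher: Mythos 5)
The paper does not actually prove this theorem: it is quoted from Gryzlov \cite{G} with no argument given, so your attempt has to be judged against the known proofs, and it contains a genuine gap --- one you partly diagnose yourself but do not close. The closing-off skeleton, requirement (i), and the final covering argument (with $V_y$ chosen by Hausdorffness so that $x \notin \overline{V_y}$, and $W_z$ chosen by $\theta$-closedness of $H$) are all fine; the fatal point is requirement (ii), and it is not merely ``the genuine obstacle'' but provably unattainable along these lines. In a first-countable Hausdorff space that is not Urysohn, a single sequence can $\theta$-converge to more than $\mathfrak{c}$ many points, so closing a set of size $\leq 2^{\aleph_0}$ under $\theta$-limits can destroy the cardinality bound, and no bookkeeping repairs this. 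Your own scheme demonstrates that the failure is essential rather than technical: requirement (i) is stated for \emph{countable} $A$, and with countable $A$ your final covering argument goes through verbatim when $H$-closedness is weakened to the almost Lindel\"of property (an almost Lindel\"of space supplies a countable subfamily $\{V_y : y \in A\} \cup \{W_{z_j} : j \in \omega\}$ whose closures cover $X$, and the $\overline{W_{z_j}}$ still miss $H$). So if (i) and (ii) were simultaneously achievable in every first-countable Hausdorff space, you would have proved that every first-countable Hausdorff almost Lindel\"of space has cardinality at most $\mathfrak{c}$ --- exactly the statement the paper points out is false, via the Bella--Yaschenko examples \cite{BY} of first-countable Hausdorff almost Lindel\"of spaces of cardinality far above the continuum, cited immediately after Theorem \ref{gryzlov}. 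Since (i) is always achievable at cost $\leq \mathfrak{c}$ per stage, it is (ii) that must fail in those spaces: they contain sets of size $\leq \mathfrak{c}$ with $\theta$-closure of size $> \mathfrak{c}$. What you have written is thus, in substance, a proof of the Urysohn case, Theorem \ref{bellacammaroto}, where $\theta$-limits of sequences are unique and (ii) costs only $2^{\aleph_0}$ points per stage.

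Your closing paragraph correctly locates the difficulty but offers only a direction (``invoke $H$-closedness locally and exploit finiteness''), not an argument, and this direction is where the entire content of Gryzlov's theorem lives. The known proofs abandon the closing-off of point sets altogether: Gryzlov's argument, and the later filter-theoretic treatments of cardinality bounds for $H$-closed spaces, work with finite open families and open (ultra)filters --- using, e.g., that every open ultrafilter on an $H$-closed space converges --- and thread the finiteness of $H$-closedness through every level of an $\omega$-stage tree construction, so that each point of $X$ is captured by one of at most $2^{\aleph_0}$ branches. The contrast with the false almost Lindel\"of analogue shows why this is forced: the finite-versus-countable distinction must enter the combinatorics at every stage, whereas your sketch uses finiteness only once, in the last covering step, where it is interchangeable with countability. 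For the actual structure of the argument, see Hodel's survey \cite{H}.
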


The obvious common generalization of Theorems $\ref{bellacammaroto}$ and $\ref{gryzlov}$ was later disproved by Bella ad Yaschenko in \cite{BY}, where the authors construct first-countable Hausdorff almost Lindel\"of spaces of arbitrarily large cardinality.

By considering a game-theoretic variant of the almost Lindel\"of property we will be able to relax the first-countability condition in Theorem $\ref{bellacammaroto}$ while Theorem $\ref{gryzlov}$ will be key in disproving a conjecture of Babinkostova, Pansera and Scheepers regarding another game-theoretic strengthening of the same property.

The interested reader can find more on variants of Arhangel'skii's Theorem in Richard Hodel's survey paper \cite{H}.

One of the most important cardinal invariants in topology is the \emph{cellularity}, that is the supremum of sizes of families of pairwise disjoint non-empty open sets in a topological space. This cardinal invariant is featured in many cardinal bounds in Juh\'asz's book \cite{J} and has stimulated much research on the border between topology and set theory. 

Leandro Aurichi \cite{A} recently introduced a natural game strengthening of countable cellularity and discussed its productive behavior.  Considering higher cardinal versions of his game will allow us to give a game-theoretic proof of Shapirovskii's bound on the number of regular open sets, improve Aurichi's results on products and show that the Hajnal-Juh\'asz bound on the cardinality of a first-countable ccc space is true for almost regular non-Hausdorff spaces. We finish by showing that this last bound is not true for $T_1$ spaces, even if the countable chain condition is strengthened to Aurichi's game version of it. 

For notation and terminology we refer to \cite{E}.

\section{Variants of Arhangel'skii's Theorems}

Given a topological space $X$ we denote by $\mathcal{D}_X$ the collection of all families of open sets $\mathcal{U}$ such that $\bigcup \mathcal{U}$ is dense in $X$ and by $\overline{\mathcal{O}}_X$ the collection of all families of open sets $U$ such that $\{\overline{U}: U \in \mathcal{U} \}$ covers $X$.

Recall that a space $X$ is said to be \emph{almost regular} if the collection of all non-empty regular closed sets is a $\pi$-network for $X$, that is, for every non-empty open set $U \subset X$ there is a non-empty open set $V$ such that $\overline{V} \subset U$. Every regular space is clearly almost regular.

The closed pseudocharacter of $X$ ($\psi_c(X)$)
 is defined as the least cardinal $\kappa$ such that for every $x \in X$ there is a family $\mathcal{U}$ of open neighbourhoods of $X$ such that $\bigcap \{\overline{V}: V \in \mathcal{V} \}=\{x\}$.

\begin{theorem} \label{almostgamethm}
Let $X$ be a space of countable closed pseudocharacter (in particular, a first-countable Hausdorff space). If player two has a winning strategy in $G^{\omega_1}_1(\mathcal{O}_X, \overline{\mathcal{O}}_X)$ then $|X| \leq 2^{\aleph_0}$.
\end{theorem}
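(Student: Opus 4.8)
The plan is to run an Arhangel'ski\u\i{}-type closing-off argument in which the length-$\omega_1$ game is played \emph{inside} an elementary submodel that reflects the winning strategy. Fix a winning strategy $\sigma$ for player two in $G^{\omega_1}_1(\mathcal{O}_X,\overline{\mathcal{O}}_X)$, and for each $x\in X$ fix a countable family $\{W_n(x):n\in\omega\}$ of open neighbourhoods witnessing $\psi_c(X)\le\aleph_0$, so that $\bigcap_n\overline{W_n(x)}=\{x\}$; regard the assignment $x\mapsto\langle W_n(x):n\in\omega\rangle$ as a single object. I would then choose $M\prec H(\theta)$ with $X,\tau,\sigma$ and this assignment in $M$, with $|M|=2^{\aleph_0}$ and ${}^{\omega}M\subseteq M$; the latter forces $\omega_1\subseteq M$ and, crucially, that every countable sequence of elements of $M$ is itself an element of $M$. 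It suffices to prove $X\subseteq M$, since then $|X|\le|M|=2^{\aleph_0}$.

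The engine of the proof is a confinement remark. If a partial play $\langle\mathcal{U}_\gamma:\gamma\le\beta\rangle$ has all its entries in $M$, then the sequence itself lies in $M$ (as $\beta<\omega_1$ is countable and ${}^{\omega}M\subseteq M$), and hence player two's response $V_\beta=\sigma(\langle\mathcal{U}_\gamma:\gamma\le\beta\rangle)$ also lies in $M$, being the value at an argument of $M$ of a function in $M$. Thus, if player one is confined to covers belonging to $M$, then by transfinite recursion every response $V_\beta$ and every countable initial segment of the play stays in $M$, and the recursion runs through all $\beta<\omega_1$. I would build such a play so that player one's covers are assembled, stage by stage, from the countable neighbourhood data $\{W_n(x)\}$ of the points of $X$ seen so far; each such cover is a countable modification of earlier data and so, by countable closure, is a legitimate move in $M$.

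Now suppose towards a contradiction that there is a point $p\in X\setminus M$, and carry out the recursion so as to separate $p$, through closures, from more and more of $X$. The aim is that at stage $\beta$ player one plays a cover of $X$ designed so that the only members $\sigma$ can select while remaining in $M$ are \emph{good} neighbourhoods, namely sets $W_n(x)\in M$ with $p\notin\overline{W_n(x)}$; by countable pseudocharacter such a good neighbourhood exists around every $x\in X\cap M$. Since $\sigma$ is winning, at the end of the play the closures $\{\overline{V_\beta}:\beta<\omega_1\}$ cover $X$, so in particular $p\in\overline{V_{\beta_0}}$ for some $\beta_0<\omega_1$, with $V_{\beta_0}\in M$.

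The hard part will be exactly to make this last conclusion contradict the construction. One must guarantee that a response $V_\beta\in M$ selected by $\sigma$ against an $M$-cover always has $p\notin\overline{V_\beta}$; the difficulty is that any cover of $X$ must cover $p$ itself, so player one cannot simply present a cover all of whose members have closures missing $p$. The point $p$ is necessarily caught by some member, and the whole problem is to arrange that every such \emph{bad} member falls outside $M$, and hence cannot be the value of $\sigma$ at an $M$-history. This is precisely where the two hypotheses are used in tandem: countable closed pseudocharacter supplies, around each already-reflected point, a good neighbourhood in $M$ whose closure avoids $p$, while the length $\omega_1$ of the game, matched to the closure of $M$ under countable sequences (so that $\omega_1\subseteq M$ and partial plays reflect), lets these countably-presented good neighbourhoods be fed to $\sigma$ without ever exposing an $M$-set whose closure reaches $p$. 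Verifying that at every stage such a cover exists in $M$ — equivalently, that player two, confined by $\sigma\in M$ to responses in $M$, can never place $p$ into a closure — is the crux; once it is in hand, the winning property forces $p\in\overline{V_{\beta_0}}$ against the construction's $p\notin\overline{V_{\beta_0}}$, yielding the contradiction and hence $X\subseteq M$.
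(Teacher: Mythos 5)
Your overall frame (a countably closed $M \prec H(\theta)$ of size $\mathfrak{c}$ with $\sigma, X, \tau$ and the neighbourhood assignment in $M$, responses to $M$-histories staying in $M$, and the length-$\omega_1$ play surviving because countable initial segments reflect into $M$) is sound, and it differs in packaging from the paper, which avoids submodels entirely and instead builds an explicit tree of covers indexed by $\bigcup_{\alpha<\omega_1}\omega^\alpha$. But there is a genuine gap, and you name it yourself as ``the crux'': you never establish that at each stage player one can produce a cover in $M$ against which $\sigma$'s response is guaranteed to be good, i.e.\ to have closure missing $p$. Your proposed mechanism --- ``arrange that every bad member falls outside $M$'' --- cannot work as stated. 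The response $V_\beta$ automatically lies in $M$, but $M$ is full of open sets whose closures contain $p$ (for instance $X$ itself), and since every cover of $X$ must contain a member containing $p$, nothing prevents $\sigma$ from selecting such a member that happens to lie in $M$. Membership of the play in $M$ constrains where the response \emph{lives}, not how it behaves near $p$; and there is no reason a cover $\mathcal{U} \in M$ should exist all of whose $M$-members are good, because ``good'' is defined from the external point $p \notin M$ and is not expressible in $M$, so elementarity gives you no purchase on that property.

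The missing idea is the paper's Claim 1, a strategy-stealing lemma: for any history $(\mathcal{U}_\beta : \beta < \alpha)$ of covers there is a point $x$ such that \emph{every} neighbourhood $U$ of $x$ is realized as $F((\mathcal{U}_\beta : \beta < \alpha)^\frown(\mathcal{U}))$ for some cover $\mathcal{U}$ --- otherwise each $x$ would have a neighbourhood $U_x$ never realized, and playing the cover $\{U_x : x \in X\}$ would force the response to be some $U_y$, a contradiction. With this lemma your architecture closes up: given an $M$-history $h$, elementarity yields such a point $x_h \in M$; since $p \notin M$ we have $x_h \neq p$, so countable closed pseudocharacter gives $n$ with $p \notin \overline{W_n(x_h)}$, and $W_n(x_h) \in M$; by elementarity again there is a cover $\mathcal{U} \in M$ with $\sigma(h^\frown(\mathcal{U})) = W_n(x_h)$, and \emph{that} is player one's move. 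Pinpointing the response, rather than censoring the bad members of the cover, is what makes the argument go, and it is exactly what the paper does without submodels: it iterates Claim 1 transfinitely to build points $x_f$ and covers $\mathcal{U}_f$ for $f \in \bigcup_{\alpha<\omega_1}\omega^\alpha$ with $F((\mathcal{U}_{f \upharpoonright \gamma} : \gamma < \beta)^\frown(\mathcal{U}_{f \cup \{(\beta,n)\}})) = U_n(x_f)$, sets $D = \{x_f : f\}$ of size at most $\mathfrak{c}$, and shows $D = X$ by defeating $F$ along a branch whose responses all have closures avoiding any putative $p \in X \setminus D$. Until you supply Claim 1 (or an equivalent device for steering $\sigma$'s responses), your proposal is an outline with its central step unproved.
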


\begin{theorem}  \label{weaklygamethm}
Let $X$ be a first-countable almost regular space. If player two has a winning strategy in $G^{\omega_1}_1(\mathcal{O}_X, \mathcal{D}_X)$ then $|X| \leq 2^{\aleph_0}$.
\end{theorem}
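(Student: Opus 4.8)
The plan is to run an elementary submodel argument of Arhangel'skii type, using the winning strategy $\sigma$ of player two to manufacture a contradiction from the assumption that the space is not captured by the model. Fix a sufficiently large regular cardinal $\theta$ and choose $M \prec H(\theta)$ with $X, \tau, \sigma \in M$, $|M| = 2^{\aleph_0}$ and ${}^{\omega}M \subseteq M$; the last condition forces $\omega_1 \subseteq M$. Since $[M]^{\aleph_0} \subseteq M$ and $(2^{\aleph_0})^{\aleph_0} = 2^{\aleph_0}$, such an $M$ exists. It suffices to prove $X \subseteq M$, for then $|X| = |X \cap M| \leq |M| = 2^{\aleph_0}$. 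So suppose toward a contradiction that $p \in X \setminus M$, and fix a decreasing countable local base $\{B_n : n < \omega\}$ at $p$ coming from first-countability.

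The engine of the proof is the observation that selections stay inside the model. I would consider plays of $G^{\omega_1}_1(\mathcal{O}_X, \mathcal{D}_X)$ in which player one only uses open covers belonging to $M$ and player two answers according to $\sigma$. Since a partial play of length $\beta < \omega_1$ is a countable sequence of elements of $M$, it is itself an element of $M$ by ${}^{\omega}M \subseteq M$; as $\sigma \in M$, elementarity gives that player two's response $V_\beta = \sigma(\langle \mathcal{U}_\gamma : \gamma \leq \beta \rangle)$ again lies in $M$, regardless of the cardinality of the cover $\mathcal{U}_\beta$. Consequently every set selected by player two throughout such a run is an open set coded in $M$.

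The heart of the argument is then to choose player one's covers, all from $M$, so that player two is forced to select sets avoiding a fixed neighbourhood of $p$. Here almost regularity enters: for each $n$ I would use it to replace $B_n$ by a nonempty open set $W_n$ with $\overline{W_n} \subseteq B_n$, so that the regular closed sets $\overline{W_n}$ shrink toward $p$. The point is that, although every cover of $X$ must contain a member meeting any prescribed neighbourhood of $p$, one can arrange covers in $M$ whose members that come close to $p$ are not themselves elements of $M$; since $V_\beta \in M$, player two is then barred from those members and must pick a set disjoint from the relevant $\overline{W_n}$. Running the game for all $\omega_1$ innings and using a pressing-down and closure argument -- every $B_n$ meets the dense union at some countable stage, so the relevant stages are bounded by a single $\beta^* < \omega_1$ -- I would distill a single nonempty open set $W$ that is disjoint from $\bigcup_{\beta < \omega_1} V_\beta$. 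This contradicts the fact that, $\sigma$ being winning, the selected sets have dense union; hence $X \subseteq M$ and the theorem follows.

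The step I expect to be the main obstacle is precisely the cover construction in the previous paragraph: beating the trivial covering constraint (a genuine cover of $X$ must cover $p$) by pushing the members near $p$ outside the model, and organising the $\omega_1$ innings so that the per-inning avoidance coalesces into avoidance of one fixed open set by the whole union. This is exactly where both almost regularity (to pass to neighbourhoods with controlled closures) and the uncountable length of the game are indispensable, and it is the reason the bare weakly Lindel\"of hypothesis of Question \ref{bellginsburgwoods} does not suffice.
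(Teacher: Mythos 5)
Your proposal leaves its central step unproven, and the mechanism you sketch for it does not work. The fact that player two's response $V_\beta$ lies in $M$ (which you correctly derive from ${}^{\omega}M \subseteq M$ and $\sigma \in M$) imposes no constraint whatsoever on \emph{where $V_\beta$ sits in the space}: open sets belonging to $M$ can meet any prescribed nonempty open $W$, and in fact they must be available in abundance, since by elementarity the members of $\mathcal{U} \cap M$ already cover $X \cap M$, so as soon as $W$ meets $X \cap M$ every cover $\mathcal{U} \in M$ has a member in $M$ meeting $W$ (and even when $W \cap M = \emptyset$, nothing prevents $\sigma$ from answering with an $M$-member that spills into $W$). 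So the assertion that player two ``is barred from'' the members near $p$ has no justification, and you yourself flag this cover construction as the unfinished obstacle. What is actually needed is a statement about the \emph{strategy}, not about the model, and this is exactly the paper's Claim 1: for every countable partial play $(\mathcal{U}_\beta : \beta < \alpha)$ there is a point $x$ such that \emph{every} neighbourhood $U$ of $x$ is realized as the response $F((\mathcal{U}_\beta : \beta < \alpha)^\frown(\mathcal{U})) = U$ for a suitable cover $\mathcal{U}$ --- proved by a short diagonalization: if each $x$ had a forbidden neighbourhood $U_x$, player one plays the cover $\{U_x : x \in X\}$, and the response is some $U_y$, a contradiction. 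This per-inning steering replaces your membership filter, and no submodel is needed: the paper builds a tree of plays indexed by $\bigcup_{\alpha<\omega_1}\omega^\alpha$, choosing points $x_f$ and covers realizing the basic neighbourhoods $U_n(x_f)$, so that $D = \{x_f : f\}$ has size at most $\mathfrak{c}$.

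There is also a structural error in your target. Since the payoff set is $\mathcal{D}_X$, defeating $\sigma$ can only ever witness a \emph{failure of density}, i.e.\ exhibit one nonempty open set missed by all selections; a single point $p \in X \setminus M$ generates no contradiction, because the selections can have dense union while all lying in $M$. So no argument of this shape can prove $X \subseteq M$. Accordingly, the paper's Claim 2 establishes only that $D$ is dense: if not, \emph{almost regularity} yields a nonempty open $V$ with $\overline{V} \subseteq X \setminus \overline{D}$, and player one walks down the tree, at inning $\alpha$ choosing $n_\alpha$ with $U_{n_\alpha}(x_f) \cap \overline{V} = \emptyset$, producing a legal $F$-play whose selections all miss $V$. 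The passage from $d(X) \leq \mathfrak{c}$ to $|X| \leq \mathfrak{c}$ is then a separate, final application of first countability, not something the game delivers; your scheme conflates these two stages. Finally, your ``pressing-down'' step is circular: in constructing a play that defeats $\sigma$ you cannot invoke density of $\bigcup_\beta V_\beta$ --- that is precisely the statement being contradicted --- and in the length-$\omega_1$ game the winning condition is evaluated only on the full $\omega_1$-sequence, so no countable stage need capture any $B_n$.
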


The proofs of Theorems $\ref{almostgamethm}$ and $\ref{weaklygamethm}$ have been clumped together since they are very similar.

\begin{proof}
For every $x \in X$ fix a family $\{U_\alpha(x): \alpha < \mathfrak{c}\}$ at $x$ witnessing the countable closed pseudocharacter (respectively, the countable local character) of the point $x$ in $X$. Let $F$ be a winning strategy for player II in the game $G^{\omega_1}_1(\mathcal{O}_X, \overline{\mathcal{O}}_X)$ (respectively, $G^{\omega_1}_1(\mathcal{O}, \mathcal{D})$).

\vspace{.1in}

\noindent {\bf Claim 1}
Let $\alpha < \omega_1$ and $(\mathcal{U}_\beta: \beta < \alpha )$ be a sequence of open covers. For every neighbourhood $U$ of $x$ there is an open cover $\mathcal{U}$ such that:

$$F((\mathcal{U}_\beta: \beta < \alpha )^\frown (\mathcal{U})) =U$$

\begin{proof}[Proof of Claim 1]

Assume the contrary, and for every $x \in X$ fix a neighbourhood $U_x$ of $x$, such that, for every open cover $\mathcal{U}$, we have:

$$F((\mathcal{U}_\beta: \beta < \alpha)^\frown (\mathcal{U})) \neq U_x$$

The set $\mathcal{V}=\{U_x: x \in X \}$ is an open cover of $X$, and thus we may find $y \in X$ such that $F((\mathcal{U}_\beta: \beta < \alpha )^\frown (\mathcal{V})) = U_y$, but that is a contradiction.
 \renewcommand{\qedsymbol}{$\triangle$}
\end{proof}

Use the claim to choose $x_\emptyset \in X$ so that for each open neighbourhood $U$ of $x_\emptyset$ there is an open cover $\mathcal{U}$ such that $F((\mathcal{U})) = U$. Then, for each $n<\omega$ choose an open cover $\mathcal{U}_{\{(0, n)\}}$ such that $F((\mathcal{U}_{\{(0, n) \}}) =U_n(x_\emptyset)$.

For each $n_0 < \omega$ choose $x_{\{(0, n_0)\}} \in X$ satisfying Claim 1 for the open cover $\mathcal{U}_{\{(0, n_0)\}}$. Then, for each $n_0, n < \omega$ choose an open cover $\mathcal{U}_{\{(0, n_0), (1, n)\}}$ with:

$$F((\mathcal{U}_{\{(0, n_0)\}}, \mathcal{U}_{\{(0, n_0), (1, n)\}})) = U_n(x_{\{(0, n_0)\}}).$$

Then for each $n_0, n_1 < \omega$ choose $x_{\{(0, n_0), (1, n_1)\}} \in X$ as in Claim 1 for the sequence of open covers $(\mathcal{U}_{\{(0, n_0)\}}, \mathcal{U}_{\{(0, n_0), (1, n_1)\}})$. For each $n < \omega$ choose an open cover $\mathcal{U}_{\{(0, n_0), (1, n_1), (2, n)\}}$ such that 

$$F((\mathcal{U}_{\{(0, n_0)\}}, \mathcal{U}_{\{(0, n_0), (1, n_1)\}}, \mathcal{U}_{\{(0, n_0), (1, n_1), (2, n)\}})) = U_n (x_{\{(0, n_0), (1, n_1)\}}),$$

and so on. 

Now let $\alpha < \omega_1$ and suppose that for every $f \in \bigcup_{\beta< \alpha} \omega^\alpha$ we have chosen points $x_f \in X$ and open covers $\mathcal{U}_f$ such that for all $\beta < \alpha$ and for every $f \in \omega^\beta$ and $n<\omega$ we have:

$$F((\mathcal{U}_{f \upharpoonright \gamma}: \gamma < \beta )^\frown (\mathcal{U}_{f \cup \{(\beta, n)\}})) =U_n(x_f)$$.

Consider $f \in \omega^\alpha$. Let $x_f \in X$ be the point guaranteed by applying Claim 1 to the sequence $(U_{f \upharpoonright \beta}: \beta < \alpha)$ of open covers. Then, for each $n<\omega$ choose an open cover $\mathcal{U}_{f \cup \{(\alpha, n)\}}$ such that:

$$F((\mathcal{U}_{f \upharpoonright \beta}: \beta < \alpha)^\frown (\mathcal{U}_{f \cup \{(\alpha, n)\}})) \subset U_n(x_f)$$

At the end of the induction we will have chosen, for each $f \in \bigcup_{\alpha < \omega_1} \omega^\alpha$ a point $x_f \in X$ and an open cover $\mathcal{U}_f$ such that for each such $f$ and each $n<\omega$, if $f \in \omega^\chi$ then

$$F((\mathcal{U}_{f \upharpoonright \beta}: \beta < \chi)^\frown (\mathcal{U}_{f \cup \{(\chi, n)\}})) = U_n(x_f).$$

Let $D=\{x_f: f \in \bigcup_{\alpha < \omega_1} \omega^\alpha\}$ and note that $|D| \leq \mathfrak{c}$.

\vspace{.1in}

\noindent {\bf Claim 2.} The set $D$ coincides with $X$ (respectively, is dense in $X$).

\begin{proof}[Proof of Claim 2]
Suppose not. Then we can find a point $p \in X \setminus D$ (respectively, an open set $V$ such that $\overline{V} \subset X \setminus \overline{D}$). Choose $n_0$ such that $p \notin U_{n_0}$ (respectively, $U_{n_0}(x_\emptyset) \cap \overline{V}=\emptyset$). Player I plays with $\mathcal{U}_{\{(0, n_0)\}}$ and player II responds by means of the strategy $F$. Suppose player I has played open covers $( \mathcal{U}_{f \upharpoonright \beta}: \beta < \alpha)$ for some $f \in \omega^\alpha$ such that $f(0)=n_0$. Choose $n_\alpha$ such that $p \notin U_{n_\alpha}$ (respectively, $U_{n_\alpha}(x_f) \cap \overline{V}=\emptyset$) and let player I play $\mathcal{U}_{f \cup \{(\alpha, n_\alpha)\}}$. In this way we build an $F$-play which is lost by player two. But this contradicts the fact that $F$ is a winning strategy for player two.
 \renewcommand{\qedsymbol}{$\triangle$}
\end{proof}

Since we clearly have $|D| \leq 2^{\aleph_0}$ the proof of Theorem 1 is concluded. To finish the proof of Theorem 2 it suffices to observe that, since $X$ is first countable and $D$ is dense, we also have that $|X| \leq 2^{\aleph_0}$.

\end{proof}

\begin{corollary}
[CH] Suppose $X$ is a space of countable closed pseudocharacter. Then player II has a winning strategy in $G^{\omega_1}_1(\mathcal{O}_X, \overline{\mathcal{O}}_X)$ if and only if $|X| \leq \aleph_1$.
\end{corollary}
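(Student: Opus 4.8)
The plan is to prove the two implications separately, observing that the Continuum Hypothesis is needed only for the forward direction.

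For the forward implication, suppose player II has a winning strategy in $G^{\omega_1}_1(\mathcal{O}_X, \overline{\mathcal{O}}_X)$. Since $X$ has countable closed pseudocharacter by hypothesis, Theorem \ref{almostgamethm} applies verbatim and yields $|X| \leq 2^{\aleph_0}$. Invoking CH to identify $2^{\aleph_0}$ with $\aleph_1$, we conclude $|X| \leq \aleph_1$. This step requires no work beyond citing the theorem already proved.

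For the reverse implication, suppose $|X| \leq \aleph_1$ and enumerate the points of $X$ (with repetitions if necessary) as $\{x_\alpha : \alpha < \omega_1\}$. I would describe the following strategy for player II: at the $\alpha$-th inning, when player I presents an open cover $\mathcal{U}_\alpha$, player II selects any member $V_\alpha \in \mathcal{U}_\alpha$ with $x_\alpha \in V_\alpha$; such a set exists precisely because $\mathcal{U}_\alpha$ is a cover. At the end of the play every point $x_\alpha$ lies in $V_\alpha$, so the family $\{V_\alpha : \alpha < \omega_1\}$ is already an open cover of $X$; a fortiori the family of closures $\{\overline{V_\alpha} : \alpha < \omega_1\}$ covers $X$, which is exactly the winning condition $\{V_\alpha : \alpha < \omega_1\} \in \overline{\mathcal{O}}_X$. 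Hence this prescription is a winning strategy for player II.

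I expect essentially no obstacle here. The forward direction is an immediate consequence of Theorem \ref{almostgamethm} under CH, while the reverse direction merely reproduces the elementary observation, recorded in the introduction, that a space of size at most $\aleph_1$ is won by player II already in $G^{\omega_1}_1(\mathcal{O}_X, \mathcal{O}_X)$; since $\mathcal{O}_X \subseteq \overline{\mathcal{O}}_X$, any strategy winning the former game also wins the latter. The only point worth stating explicitly is that CH is genuinely needed only to pass from $2^{\aleph_0}$ to $\aleph_1$ in the forward direction, whereas the converse holds outright in ZFC.
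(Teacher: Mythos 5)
Your proposal is correct and matches the paper's intended argument exactly: the paper states this corollary without proof as an immediate consequence of Theorem \ref{almostgamethm} (plus CH) in one direction, and of the introduction's observation that $|X|\leq\aleph_1$ gives player II a winning strategy already in $G^{\omega_1}_1(\mathcal{O}_X,\mathcal{O}_X)$, hence in the weaker game since $\mathcal{O}_X\subseteq\overline{\mathcal{O}}_X$, in the other. Your remark that CH is used only in the forward direction is also accurate.
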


\begin{corollary}
[CH] Suppose $X$ is a first-countable almost regular space. Then player II has a winning strategy in $G^{\omega_1}_1(\mathcal{O}_X, \mathcal{D}_X)$ if and only if $|X| \leq \aleph_1$.
\end{corollary}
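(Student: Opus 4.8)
The plan is to prove the two implications separately, with CH entering only to collapse $2^{\aleph_0}$ to $\aleph_1$; the structure mirrors the preceding corollary. The forward direction is immediate: if player II has a winning strategy in $G^{\omega_1}_1(\mathcal{O}_X, \mathcal{D}_X)$, then Theorem \ref{weaklygamethm} yields $|X| \leq 2^{\aleph_0}$, and since we are assuming CH we have $2^{\aleph_0} = \aleph_1$, so $|X| \leq \aleph_1$. This is precisely where the topological hypotheses of first-countability and almost regularity are consumed, since they are exactly what is needed to invoke Theorem \ref{weaklygamethm}.

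For the backward direction I would build an explicit winning strategy for player II, using nothing beyond the assumption $|X| \leq \aleph_1$. Fix an enumeration $X = \{x_\gamma : \gamma < \omega_1\}$ (repetitions allowed if $|X| < \aleph_1$). At the $\beta$-th inning, when player I plays an open cover $\mathcal{U}_\beta \in \mathcal{O}_X$, let player II respond by choosing any $V_\beta \in \mathcal{U}_\beta$ with $x_\beta \in V_\beta$; such a $V_\beta$ exists because $\mathcal{U}_\beta$ covers $X$. After all $\omega_1$ innings the selected sets satisfy $\bigcup \{V_\beta : \beta < \omega_1\} \supseteq \{x_\beta : \beta < \omega_1\} = X$, so $\{V_\beta : \beta < \omega_1\}$ is in fact an open cover of $X$, hence a fortiori a member of $\mathcal{D}_X$. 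Thus the prescribed strategy is winning, and player II wins $G^{\omega_1}_1(\mathcal{O}_X, \mathcal{D}_X)$.

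I expect no genuine obstacle here. The forward direction is carried entirely by Theorem \ref{weaklygamethm} together with CH, and the backward direction is purely combinatorial, requiring neither first-countability nor almost regularity. The only point demanding a moment of care is matching the order type $\omega_1$ of the enumeration of $X$ with the $\omega_1$ innings of the game, so that every point of $X$ has been covered by the time the play terminates. I would also remark that the selection above in fact produces an honest open cover, so the argument establishes the formally stronger conclusion that player II even wins $G^{\omega_1}_1(\mathcal{O}_X, \mathcal{O}_X)$; the desired statement then follows because $\mathcal{O}_X \subseteq \mathcal{D}_X$ and enlarging the target family can only help player II.
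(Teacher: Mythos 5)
Your proposal is correct and matches the paper's (implicit) argument exactly: the forward direction is Theorem \ref{weaklygamethm} combined with CH, and the backward direction is the trivial enumeration strategy already noted in the paper's introduction for $G^{\omega_1}_1(\mathcal{O}_X, \mathcal{O}_X)$, which a fortiori wins $G^{\omega_1}_1(\mathcal{O}_X, \mathcal{D}_X)$ since every open cover has dense union. Your observations that CH is only used in the forward direction and that the backward direction needs neither first-countability nor almost regularity are accurate.
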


If player two can always win the respective games in $\omega$ many moves, then it's clear that the set $D$ constructed in the proof is actually countable. This allows us to give alternative proofs to two theorems presented in \cite{BPS}.

\begin{theorem} \label{countablethm}
Let $X$ be a space of countable closed pseudocharacter. If player two has a winning strategy in the game $G^\omega_1(\mathcal{O}, \overline{\mathcal{O}})$ then $X$ is countable.
\end{theorem}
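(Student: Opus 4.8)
The plan is to repeat, almost verbatim, the construction carried out in the proof of Theorem \ref{almostgamethm}, with the single change that the recursion now runs along the countable tree $\omega^{<\omega}$ rather than along $\bigcup_{\alpha < \omega_1} \omega^\alpha$. The point is that a play of $G^\omega_1(\mathcal{O}_X, \overline{\mathcal{O}}_X)$ has length $\omega$, so its innings are indexed by $\beta < \omega$ and the natural index set for the partial plays built during the recursion is exactly the set of finite sequences of natural numbers. First I would fix, for each $x \in X$, a countable family $\{U_n(x): n < \omega\}$ of open neighbourhoods of $x$ witnessing countable closed pseudocharacter, that is with $\bigcap_{n<\omega} \overline{U_n(x)} = \{x\}$, and let $F$ be a winning strategy for player II. Claim 1 from the proof of Theorem \ref{almostgamethm} goes through unchanged for every finite sequence of open covers, since its proof only uses that $F$ is a strategy and that the neighbourhoods $U_x$ form an open cover of $X$.

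Using Claim 1 I would then carry out the recursion exactly as before, but stopping at level $\omega$: for each $f \in \omega^{<\omega}$ of length $\chi < \omega$ I pick a point $x_f \in X$ supplied by Claim 1 for the sequence $(\mathcal{U}_{f \upharpoonright \beta}: \beta < \chi)$, and for each $n < \omega$ I choose an open cover $\mathcal{U}_{f \cup \{(\chi, n)\}}$ with $F((\mathcal{U}_{f \upharpoonright \beta}: \beta < \chi)^\frown (\mathcal{U}_{f \cup \{(\chi, n)\}})) = U_n(x_f)$. Setting $D = \{x_f: f \in \omega^{<\omega}\}$, the effect of the game having length $\omega$ is that $\omega^{<\omega}$ is countable, so $|D| \leq \aleph_0$ at once; this is precisely the observation made just before the statement. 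It then remains to show $D = X$, which is the analogue of Claim 2. Supposing toward a contradiction that $p \in X \setminus D$, at each finite stage $\alpha$ with current point $x_f$ I would use $\bigcap_{n} \overline{U_n(x_f)} = \{x_f\} \not\ni p$ to choose $n_\alpha$ with $p \notin \overline{U_{n_\alpha}(x_f)}$ and let player I play $\mathcal{U}_{f \cup \{(\alpha, n_\alpha)\}}$. This produces a full $F$-play of length $\omega$ whose responses $V_\alpha = U_{n_\alpha}(x_f)$ all satisfy $p \notin \overline{V_\alpha}$, so $\{\overline{V_\alpha}: \alpha < \omega\}$ omits $p$ and player II loses, contradicting that $F$ is winning. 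Hence $D = X$ and $X$ is countable.

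The only point requiring care, and the one I would expect to be the main (though mild) obstacle, is checking that the diagonalizing play in the final step really is a legitimate $F$-play of length exactly $\omega$. Here the match between the length of the game and the height of the index tree $\omega^{<\omega}$ actually makes things easier than in the $\omega_1$ case: the recursion yields a genuine extension at every finite stage and there is no limit stage to negotiate, so the play is unambiguously defined and the verification is essentially immediate. No idea beyond the length bookkeeping is needed, since the countability of $D$ is forced purely by the fact that player II wins in $\omega$ many moves.
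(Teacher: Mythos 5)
Your proposal is correct and is exactly the argument the paper intends: the paper proves this theorem by a one-line remark that when player two wins in $\omega$ moves, the set $D$ from the proof of Theorem \ref{almostgamethm} is indexed by the countable tree $\omega^{<\omega}$ and hence countable, while Claim 2 (run with $p \notin \overline{U_{n_\alpha}(x_f)}$, as you do) gives $D = X$. Your expansion, including the observation that the length-$\omega$ diagonalizing play needs no limit-stage bookkeeping, fills in precisely the details the paper leaves implicit.
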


\begin{theorem} 
Let $X$ be a first.countable almost regular game. If player two has a winning strategy in the game $G^\omega_1(\mathcal{O}, \mathcal{D})$ then $X$ is separable,
\end{theorem}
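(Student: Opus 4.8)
The plan is to run the proof of Theorem \ref{weaklygamethm} almost verbatim, observing that shortening the game from length $\omega_1$ to length $\omega$ collapses the indexing tree from $\bigcup_{\alpha<\omega_1}\omega^\alpha$ down to $\omega^{<\omega}$, which is countable. Concretely, let $F$ be a winning strategy for player II in $G^\omega_1(\mathcal{O}_X,\mathcal{D}_X)$, and for each $x\in X$ fix a countable local base $\{U_n(x):n<\omega\}$ furnished by first-countability.

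First I would note that Claim 1 holds unchanged: its proof uses only that $F$ assigns an open set to each finite partial play and that $\{U_x:x\in X\}$ is an open cover, so the length of the ambient game is irrelevant. Using Claim 1 I would build, by recursion on $\alpha<\omega$, a system of points $x_f\in X$ and open covers $\mathcal{U}_f$ indexed by $f\in\omega^{<\omega}$, exactly as in the proof of Theorem \ref{weaklygamethm}, so that at stage $\alpha$ and for each $f\in\omega^\alpha$ and $n<\omega$,
$$F\bigl((\mathcal{U}_{f\upharpoonright\beta}:\beta<\alpha)^\frown(\mathcal{U}_{f\cup\{(\alpha,n)\}})\bigr)\subset U_n(x_f).$$
Since each node of the tree has only countably many immediate successors (one for each $n<\omega$) and the tree has height $\omega$, the set $D=\{x_f:f\in\omega^{<\omega}\}$ is countable.

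It then remains to verify that $D$ is dense, which is precisely the content of Claim 2 in the $\mathcal{D}_X$ case, and the argument transfers directly. Suppose $D$ is not dense; then $X\setminus\overline{D}$ is non-empty and open, so almost regularity yields a non-empty open set $V$ with $\overline{V}\subset X\setminus\overline{D}$, whence $\overline{V}\cap\overline{D}=\emptyset$. I would then have player I descend a single branch $f\in\omega^\omega$ of the tree: at each finite stage $\alpha$, having reached $f\upharpoonright\alpha$ with associated point $x_{f\upharpoonright\alpha}$, first-countability lets me pick $n_\alpha<\omega$ with $U_{n_\alpha}(x_{f\upharpoonright\alpha})\cap\overline{V}=\emptyset$, and player I plays $\mathcal{U}_{(f\upharpoonright\alpha)\cup\{(\alpha,n_\alpha)\}}$. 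The resulting play has length $\omega$ and all of player II's responses are contained in open sets missing $\overline{V}$, so their union cannot be dense; this contradicts the fact that $F$ is winning for player II. Hence $D$ is a countable dense subset of $X$, so $X$ is separable.

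The only point requiring care — and the sole place where the length-$\omega$ version differs in spirit from Theorem \ref{weaklygamethm} — is the bookkeeping that the branch chosen in Claim 2 is genuinely a legal $F$-play realized within the tree already constructed, so that the union of player II's moves along it is exactly $\{V_\beta:\beta<\omega\}$ for responses $V_\beta\subset U_{n_\beta}(x_{f\upharpoonright\beta})$. This is immediate from the recursion, but it is the hinge on which the contradiction turns. I do not expect any genuine obstacle here, since (as remarked in the text) the whole effect of playing only $\omega$ rounds is to make the constructed set $D$ countable while leaving both claims intact.
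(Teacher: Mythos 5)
Your proposal is correct and is precisely the paper's own argument: the authors prove this theorem by remarking that when player II wins in $\omega$ moves, the set $D$ constructed in the proof of Theorem \ref{weaklygamethm} is countable, and density of $D$ (Claim 2, using almost regularity) then gives separability. Your additional care about the branch in Claim 2 being a legal $F$-play is sound and matches the structure of the original recursion.
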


A further inspection in the proofs of Theorems $\ref{scheepersandtall}$ and $\ref{almostgamethm}$ shows that they can be formulated in a more general way.

\begin{theorem}
Let $X$ be a space such that $\psi(X) \leq 2^{\aleph_0}$. If player two has a winning strategy in $G^{\omega_1}_1(\mathcal{O}_X, \mathcal{O}_X)$ then $|X| \leq 2^{\aleph_0}$.
\end{theorem}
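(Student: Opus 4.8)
The plan is to run the argument of Theorem \ref{almostgamethm} almost verbatim, the only changes being that the pseudobase at each point now has size $\mathfrak{c}$ rather than $\omega$, and that one works with genuine open covers (the target $\mathcal{O}_X$) and the ordinary pseudocharacter in place of covers by closures and the closed pseudocharacter. Since $\psi(X) \le 2^{\aleph_0} = \mathfrak{c}$, for each $x \in X$ I fix a family $\{U_\xi(x) : \xi < \mathfrak{c}\}$ of open neighbourhoods of $x$ with $\bigcap_{\xi < \mathfrak{c}} U_\xi(x) = \{x\}$, and I let $F$ be a winning strategy for player two in $G^{\omega_1}_1(\mathcal{O}_X, \mathcal{O}_X)$. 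The analogue of Claim 1 needs no modification: its proof only uses that collecting one ``forbidden'' neighbourhood $U_x$ at every point produces an open cover, on which $F$ must answer by some member $U_y$, a contradiction. Thus, for every partial $F$-play $(\mathcal{U}_\beta : \beta < \alpha)$ of length $\alpha < \omega_1$ there is a point all of whose neighbourhoods can be forced as the next $F$-response.

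Next I would build, by recursion on $\alpha < \omega_1$, a tree of points and open covers indexed by $f \in \bigcup_{\alpha < \omega_1} \mathfrak{c}^\alpha$: to each $f$ of length $\alpha$ I attach the point $x_f$ supplied by Claim 1 for the history $(\mathcal{U}_{f \upharpoonright \beta} : \beta < \alpha)$ read off along $f$, and for every $\xi < \mathfrak{c}$ an open cover $\mathcal{U}_{f ^\frown \xi}$ forcing
$$F\bigl((\mathcal{U}_{f \upharpoonright \beta} : \beta < \alpha)^\frown (\mathcal{U}_{f ^\frown \xi})\bigr) = U_\xi(x_f).$$
Setting $D = \{x_f : f \in \bigcup_{\alpha < \omega_1} \mathfrak{c}^\alpha\}$, the crucial counting step is that $D$ is still small: since every $\alpha < \omega_1$ is countable, each $|\mathfrak{c}^\alpha| = \mathfrak{c}^{|\alpha|} \le \mathfrak{c}^{\aleph_0}$, whence
$$\Bigl|\,\bigcup_{\alpha < \omega_1} \mathfrak{c}^\alpha \,\Bigr| \le \aleph_1 \cdot \mathfrak{c}^{\aleph_0} = \aleph_1 \cdot \mathfrak{c} = \mathfrak{c},$$
using $\aleph_1 \le \mathfrak{c}$ and $\mathfrak{c}^{\aleph_0} = \mathfrak{c}$. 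Hence $|D| \le \mathfrak{c}$.

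Finally I would prove the analogue of Claim 2, namely $D = X$. If some $p \in X \setminus D$ existed, I would let player one descend the tree so as to defeat $F$: having reached a node $f \upharpoonright \alpha$ with associated point $x_{f \upharpoonright \alpha} \ne p$, the equality $\bigcap_{\xi < \mathfrak{c}} U_\xi(x_{f \upharpoonright \alpha}) = \{x_{f \upharpoonright \alpha}\}$ yields some $\xi_\alpha < \mathfrak{c}$ with $p \notin U_{\xi_\alpha}(x_{f \upharpoonright \alpha})$, and player one plays $\mathcal{U}_{(f \upharpoonright \alpha) ^\frown \xi_\alpha}$. Every move of player two in the resulting $F$-play is then an open set avoiding $p$, so $\{V_\beta : \beta < \omega_1\}$ fails to cover $p$ and is not in $\mathcal{O}_X$, contradicting that $F$ is winning. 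Therefore $D = X$ and $|X| = |D| \le \mathfrak{c}$. The one genuinely new point over the $\omega$-branching proofs is the cardinal-arithmetic bound above, which is exactly what guarantees that widening the branching from $\omega$ to $\mathfrak{c}$ leaves the tree, and hence $D$, of size at most $\mathfrak{c}$; everything else is a transcription of the argument for Theorem \ref{almostgamethm}.
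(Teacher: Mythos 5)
Your proposal is correct and is precisely the argument the paper intends: the theorem is stated there as following from ``a further inspection'' of the proofs of Theorems \ref{scheepersandtall} and \ref{almostgamethm}, i.e.\ the same tree construction with the pseudobase branching widened from $\omega$ to $\mathfrak{c}$, which is exactly what you carry out. Your explicit verification of the counting step $\bigl|\bigcup_{\alpha<\omega_1}\mathfrak{c}^\alpha\bigr|\le\aleph_1\cdot\mathfrak{c}^{\aleph_0}=\mathfrak{c}$ is the one point the paper leaves implicit, and you have it right.
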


\begin{theorem}
Let $X$ be a space such that $\psi_c(X) \leq 2^{\aleph_0}$. If player two has a winning strategy in $G^{\omega_1}_1(\mathcal{O}_X, \overline{\mathcal{O}}_X)$ then $|X| \leq 2^{\aleph_0}$.
\end{theorem}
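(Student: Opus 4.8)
The plan is to repeat the proof of Theorem \ref{almostgamethm} almost verbatim, the only change being that the tree of plays now branches with width $\mathfrak{c}$ instead of $\omega$. Since $\psi_c(X) \leq \mathfrak{c}$, for each $x \in X$ I fix a family $\{U_\alpha(x): \alpha < \mathfrak{c}\}$ of open neighbourhoods of $x$ with $\bigcap \{\overline{U_\alpha(x)}: \alpha < \mathfrak{c}\} = \{x\}$, and I fix a winning strategy $F$ for player two in $G^{\omega_1}_1(\mathcal{O}_X, \overline{\mathcal{O}}_X)$. Claim 1 from the proof of Theorem \ref{almostgamethm} goes through unchanged, since its proof only uses that player two must respond to player one's open cover by selecting one of its members; it produces, for every already-played sequence of open covers, a point $x$ such that $F$ can be steered to output any prescribed neighbourhood of $x$.

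Next I would carry out the same transfinite recursion of length $\omega_1$, but indexing the nodes by $f \in \bigcup_{\alpha < \omega_1} \mathfrak{c}^\alpha$ rather than by $f \in \bigcup_{\alpha < \omega_1} \omega^\alpha$. At a node $f \in \mathfrak{c}^\beta$ I use Claim 1 to choose a point $x_f$, and then for each $\alpha < \mathfrak{c}$ an open cover $\mathcal{U}_{f \cup \{(\beta, \alpha)\}}$ for which $$F((\mathcal{U}_{f \upharpoonright \gamma}: \gamma < \beta)^\frown (\mathcal{U}_{f \cup \{(\beta, \alpha)\}})) = U_\alpha(x_f).$$ At the end I set $D = \{x_f: f \in \bigcup_{\alpha < \omega_1} \mathfrak{c}^\alpha\}$.

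The one point that needs genuine checking, and the step I expect to be the crux, is the cardinality bound on $D$: this is exactly where the hypothesis must tame the uncountable branching. Each level of the tree is indexed by $\mathfrak{c}^\alpha$ for a countable ordinal $\alpha$, and since $\mathfrak{c}^{\aleph_0} = (2^{\aleph_0})^{\aleph_0} = 2^{\aleph_0} = \mathfrak{c}$, every such level has size $\mathfrak{c}$; as there are only $\aleph_1 \leq \mathfrak{c}$ levels, the whole tree carries $\aleph_1 \cdot \mathfrak{c} = \mathfrak{c}$ nodes, whence $|D| \leq \mathfrak{c}$. Without the identity $\mathfrak{c}^{\aleph_0} = \mathfrak{c}$ the width-$\mathfrak{c}$ branching over $\omega_1$ levels would a priori yield $\mathfrak{c}^{\omega_1}$ nodes and the bound would collapse.

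Finally I would reprove Claim 2, that $D = X$. Supposing $p \in X \setminus D$, I build an $F$-play of length $\omega_1$: at stage $\beta$, having reached the node $f \upharpoonright \beta$ with associated point $x_{f \upharpoonright \beta} \neq p$, I use $\bigcap_{\alpha < \mathfrak{c}} \overline{U_\alpha(x_{f \upharpoonright \beta})} = \{x_{f \upharpoonright \beta}\}$ to pick $\alpha_\beta < \mathfrak{c}$ with $p \notin \overline{U_{\alpha_\beta}(x_{f \upharpoonright \beta})}$, and let player one play $\mathcal{U}_{f \upharpoonright \beta \cup \{(\beta, \alpha_\beta)\}}$, to which $F$ responds with $U_{\alpha_\beta}(x_{f \upharpoonright \beta})$. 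Then $p \notin \overline{V_\beta}$ for every $\beta < \omega_1$, so $\{\overline{V_\beta}: \beta < \omega_1\}$ fails to cover $X$ and player two loses this play, contradicting that $F$ is winning. Hence $D = X$ and $|X| \leq \mathfrak{c}$.
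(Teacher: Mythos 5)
Your proposal is correct and takes essentially the same route as the paper: the paper proves this statement precisely by ``further inspection'' of the proof of Theorem~\ref{almostgamethm}, i.e.\ by widening the tree of plays from $\bigcup_{\alpha<\omega_1}\omega^\alpha$ to $\bigcup_{\alpha<\omega_1}\mathfrak{c}^\alpha$, and you correctly isolate the crux, namely that $\mathfrak{c}^{\aleph_0}=\mathfrak{c}$ keeps each level of size $\mathfrak{c}$ and hence $|D|\leq\aleph_1\cdot\mathfrak{c}=\mathfrak{c}$. (Indeed, the paper's own proof of Theorem~\ref{almostgamethm} already indexes the neighbourhood families by $\alpha<\mathfrak{c}$, anticipating exactly this generalization.)
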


Also, the following theorem can be proved along the lines of the proof of Theorem $\ref{almostgamethm}$.

\begin{theorem}
Let $X$ be a regular sequential space such that $\chi(X) \leq 2^{\aleph_0}$. If player two has a winning strategy in $G^{\omega_1}_1(\mathcal{O}, \mathcal{D})$ then $|X| \leq 2^{\aleph_0}$.
\end{theorem}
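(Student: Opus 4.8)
The plan is to follow the proof of Theorem \ref{weaklygamethm} (its $\mathcal{D}_X$ version) almost verbatim in order to manufacture a dense set $D$ with $|D| \leq 2^{\aleph_0}$, and then to replace the appeal to first-countability in the very last step by an argument that uses sequentiality instead. Fix a winning strategy $F$ for player two in $G^{\omega_1}_1(\mathcal{O}_X, \mathcal{D}_X)$. Since I no longer assume first-countability but only $\chi(X) \leq 2^{\aleph_0}$, for each $x \in X$ I fix a neighbourhood base $\{U_\alpha(x) : \alpha < \mathfrak{c}\}$ at $x$; this is exactly the family singled out at the start of the proof of Theorems \ref{almostgamethm} and \ref{weaklygamethm}. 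Claim 1 and its proof are insensitive to the size of these bases, so they carry over unchanged.

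Next I would run the same tree construction, except that the tree now branches over $\mathfrak{c}$ rather than over $\omega$: at a node $f \in \mathfrak{c}^\beta$ I use Claim 1 to choose a point $x_f$, and for each $\alpha < \mathfrak{c}$ an open cover $\mathcal{U}_{f \cup \{(\beta, \alpha)\}}$ with $F((\mathcal{U}_{f \upharpoonright \gamma} : \gamma < \beta)^\frown(\mathcal{U}_{f \cup \{(\beta, \alpha)\}})) = U_\alpha(x_f)$. Setting $D = \{x_f : f \in \bigcup_{\beta < \omega_1} \mathfrak{c}^\beta\}$, the cardinal computation $|\bigcup_{\beta < \omega_1} \mathfrak{c}^\beta| = \aleph_1 \cdot \mathfrak{c}^{\aleph_0} = \mathfrak{c}$ (using $\aleph_1 \leq \mathfrak{c}$ and $\mathfrak{c}^{\aleph_0} = \mathfrak{c}$) shows $|D| \leq 2^{\aleph_0}$. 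The proof of Claim 2 (the density version) then goes through word for word: if $D$ were not dense, regularity supplies an open set $V$ with $\overline{V} \subseteq X \setminus \overline{D}$, and because every $x_f$ lies outside $\overline{V}$ one can, at each inning, select an index $\alpha$ with $U_\alpha(x_f) \cap \overline{V} = \emptyset$; the resulting $F$-play has its second-player moves contained in $X \setminus \overline{V}$, so their union misses $V$ and is not dense, contradicting that $F$ is winning. Hence $D$ is dense, and this is the only place where regularity is used.

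It remains to pass from $|D| \leq 2^{\aleph_0}$ and the density of $D$ to $|X| \leq 2^{\aleph_0}$; here sequentiality takes over the role of first-countability. For a set $A$ write $[A]$ for its one-step sequential closure and iterate: $[A]_0 = A$, $[A]_{\alpha+1} = [[A]_\alpha]$, and $[A]_\lambda = \bigcup_{\alpha < \lambda}[A]_\alpha$ at limits. Mapping each convergent sequence in $A$ to one of its limits is a surjection from a subset of $A^\omega$ onto $[A]$, so $|[A]| \leq |A|^{\aleph_0}$. Starting from $|D| \leq \mathfrak{c}$ and using $\mathfrak{c}^{\aleph_0} = \mathfrak{c}$ together with $\aleph_1 \leq \mathfrak{c}$, a routine transfinite induction gives $|[D]_\alpha| \leq \mathfrak{c}$ for every $\alpha < \omega_1$. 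Since $\omega_1$ is regular, $[D]_{\omega_1}$ is sequentially closed, hence closed because $X$ is sequential, and therefore equals $\overline{D} = X$. Consequently $|X| = |[D]_{\omega_1}| \leq \aleph_1 \cdot \mathfrak{c} = \mathfrak{c}$, as desired.

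The construction of $D$ is routine once one has the proof of Theorem \ref{weaklygamethm} in hand, so the only genuinely new point — and the step I would scrutinize most carefully — is this final passage from a small dense set to a small space. Its content is the inequality $|X| \leq d(X)^{\aleph_0}$ for sequential spaces, whose verification hinges on the fact that the exponent stays $\aleph_0$ (rather than $\chi(X)$) at each of the $\omega_1$ sequential-closure stages, and on $\mathfrak{c}^{\aleph_0} = \mathfrak{c}$. This is precisely what allows $\chi(X) \leq 2^{\aleph_0}$ to be tolerated in place of first-countability.
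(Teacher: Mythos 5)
Your proposal is correct and is exactly the route the paper intends: the paper offers no written proof of this statement, saying only that it ``can be proved along the lines of the proof of Theorem \ref{almostgamethm},'' and your argument supplies precisely those details --- the tree construction with branching of width $\mathfrak{c}$ (legitimate since $\mathfrak{c}^{\aleph_0}=\mathfrak{c}$), regularity feeding Claim~2, and the replacement of the final first-countability step by the inequality $|X| \leq d(X)^{\aleph_0}$ for sequential spaces via the $\omega_1$-stage sequential-closure iteration. The only point worth flagging is that your surjection from convergent sequences onto the sequential closure tacitly uses uniqueness of limits, which is fine here because the paper follows Engelking's conventions, under which regular spaces are $T_1$ and hence Hausdorff.
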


Theorem $\ref{scheepersandtall}$ is no longer true if we assume that player one does not have a winning strategy in $G^{\omega_1}_1(\mathcal{O}_X, \mathcal{O}_X)$. Indeed Scheepers and Tall point out in \cite{ST} that if $X$ is Gorelic's space, then $X$ has the Rothberger property, and hence, by a result of Pawlikowski \cite{P} player one does not have a winning strategy even in the shorter game $G^\omega_1(\mathcal{O}_X, \mathcal{O}_X)$. Since, for regular spaces, Theorem $\ref{scheepersandtall}$ and Theorem $\ref{almostgamethm}$ coincide, we see that also in Theorem $\ref{almostgamethm}$ \emph{player two has a winning strategy} cannot be weakened to \emph{player one does not have a winning strategy}.

Theorem $\ref{scheepersandtall}$ suggests the following natural questions:

\begin{question} \label{questfin}
Let $X$ be a space of countable closed pseudocharacter and assume that player two has a winning strategy in $G^{\omega_1}_{fin}(\mathcal{O}_X, \mathcal{O}_X)$. Is it true that $|X| \leq 2^{\aleph_0}$?
\end{question}

As a mild motivation towards a positive answer to the above question, note that if $X$ is a $\sigma$-compact space then player two has a winning strategy in $G^{\omega}_{fin}(\mathcal{O}_X, \mathcal{O}_X)$.

\begin{question}
Let $X$ be a first-countable space in which player one does not have a winning strategy in $G^{\omega_1}_1(\mathcal{O}, \mathcal{O})$. Is it true that $|X| \leq 2^{\aleph_0}$?
\end{question}

In reference to the above question, note that the fact that player one does not have a winning strategy in $G^{\omega_1}_1(\mathcal{O}_X, \mathcal{O}_X)$ appears to be much stronger than $X$ having Lindel\"of number $\leq \omega_1$.

Theorem $\ref{almostgamethm}$ suggests the following question:

\begin{question}
Let $X$ be a first countable Hausdorff space such that player one does not have a winning strategy in $G^{\omega}_1(\mathcal{O}_X, \overline{\mathcal{O}}_X)$. Is it true that $|X| \leq 2^{\aleph_0}$?
\end{question}

If player one does not have a winning strategy in $G^{\omega_1}_1(\mathcal{O}_X, \overline{\mathcal{O}}_X)$ then $X$ is almost Lindel\"of, so the Bella-Cammaroto theorem implies that the above question has a positive answer in the realm of Urysohn spaces.

The analogous of Question $\ref{questfin}$ for Theorem $\ref{almostgamethm}$ has a negative answer, as we are now going to show.

\begin{definition}
A subset $X$ of the space $Y$ is relative $H$-closed in $Y$ if for every open cover $\mathcal{U}$ of $Y$ there is a finite subfamily $\mathcal{U}' \subset \mathcal{U}$ satisfying $X \subseteq \overline{\mathcal{U}'}$.
\end{definition}

\begin{theorem} \label{thmdenseH}
If a space $X$ is the union of countably many sets, each of which is relatively $H$-closed in $X$, then player two has a winning strategy in the game $G^\omega_{fin} (\mathcal{O}, \overline{\mathcal{O}})$.
\end{theorem}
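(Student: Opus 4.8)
The plan is to exhibit an explicit winning strategy for player two, using the fixed countable decomposition of $X$ to dispose of one piece at each inning. First I would write $X = \bigcup\{X_n : n<\omega\}$, where each $X_n$ is relatively $H$-closed in $X$, and fix this enumeration once and for all.

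I would then have player two play as follows. At the $n$-th inning player one produces an open cover $\mathcal{U}_n \in \mathcal{O}_X$; since $X_n$ is relatively $H$-closed in $X$, there is a finite subfamily $\mathcal{V}_n \subseteq \mathcal{U}_n$ such that $X_n \subseteq \overline{\mathcal{V}_n}$ (where, as in the definition, $\overline{\mathcal{V}_n}$ abbreviates the closure of $\bigcup \mathcal{V}_n$), and I would let player two respond with such a $\mathcal{V}_n$. This prescription depends only on $n$ and on the cover $\mathcal{U}_n$, so it is certainly a legitimate strategy for player two.

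It would then remain to verify that this strategy is winning, that is, that the family $\mathcal{W}=\bigcup\{\mathcal{V}_n : n<\omega\}$ belongs to $\overline{\mathcal{O}}_X$. The crucial point is that each $\mathcal{V}_n$ is \emph{finite}, so that the closure of its union equals the union of the closures of its members: $\overline{\mathcal{V}_n}=\bigcup\{\overline{V} : V \in \mathcal{V}_n\}$. Hence
$$X=\bigcup\{X_n : n<\omega\} \subseteq \bigcup\{\overline{\mathcal{V}_n} : n<\omega\} = \bigcup\{\overline{V} : V \in \mathcal{W}\},$$
which says exactly that $\{\overline{V} : V \in \mathcal{W}\}$ covers $X$, i.e. that player two has won.

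Since both the construction and its verification are direct, I do not expect any serious difficulty. The only step calling for (minor) care is the passage from ``the closures of the finite unions $\bigcup \mathcal{V}_n$ cover $X$'' to ``the collection of individual closures $\overline{V}$ covers $X$''; this is precisely where the finiteness of each $\mathcal{V}_n$ --- guaranteed by relative $H$-closedness rather than by a mere relative almost-Lindel\"of property --- is essential.
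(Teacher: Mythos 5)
Your proof is correct and follows essentially the same approach as the paper's: player two uses the fixed enumeration $\{X_n : n<\omega\}$ and, at the $n$-th inning, extracts from player one's cover a finite subfamily whose union has closure containing $X_n$. In fact your writeup is more careful than the paper's terse sketch, since you explicitly justify the step $\overline{\bigcup \mathcal{V}_n} = \bigcup\{\overline{V} : V \in \mathcal{V}_n\}$ via finiteness, which is exactly what makes the resulting family land in $\overline{\mathcal{O}}_X$.
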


\begin{proof}
Let $\{Y_n: n < \omega \}$ be a cover of $X$ by relatively $H$-closed sets. To define a winning strategy for player two in $G^\omega_1(\mathcal{O}_X, \overline{\mathcal{O}}_X)$, assuming player one chooses the open cover $\mathcal{U}$ at the $n$-th inning, let player two choose a finite collection $\mathcal{V} \subset \mathcal{O}$ such that $Y_n \subset \bigcup \mathcal{V}$.
\end{proof}

In a similar way, we can prove the following theorem.

\begin{theorem} 
(\cite{BPS}) If a space $X$ contains a dense subspace which is the union of countably many relatively $H$-closed subsets, then player two has a winning strategy in $G^\omega_{fin} (\mathcal{O}_X, \overline{\mathcal{O}}_X)$.
\end{theorem}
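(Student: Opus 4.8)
The plan is to follow the proof of Theorem \ref{thmdenseH} almost verbatim, the only genuinely new ingredient being the passage from a covering property of the dense subspace to one of the whole space. Fix a dense subspace $D = \bigcup_{n<\omega} Y_n$ of $X$ in which each $Y_n$ is relatively $H$-closed in $X$. I would describe player two's strategy exactly as in Theorem \ref{thmdenseH}: at the $n$-th inning, when player one presents an open cover $\mathcal{U}_n$ of $X$, the relative $H$-closedness of $Y_n$ furnishes a finite subfamily $\mathcal{V}_n \subseteq \mathcal{U}_n$ with $Y_n \subseteq \overline{\bigcup \mathcal{V}_n}$, and player two responds with $\mathcal{V}_n$. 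Since $\mathcal{V}_n$ is finite we have $\overline{\bigcup \mathcal{V}_n} = \bigcup_{V \in \mathcal{V}_n} \overline{V}$, so this move already guarantees that the closures of the sets selected so far cover $Y_n$ (and hence, being a finite union of closed sets, even cover $\overline{Y_n}$).

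Next I would check that the resulting play is winning. Writing $\mathcal{A} = \bigcup_{n<\omega} \mathcal{V}_n$ for the countable family that player two has accumulated at the end of the game, the construction immediately yields $\bigcup_{V \in \mathcal{A}} \overline{V} \supseteq \bigcup_{n<\omega} Y_n = D$. Thus at this stage the closures of player two's sets cover the dense set $D$, which is precisely the analogue of what happens in Theorem \ref{thmdenseH}; there, however, $D$ was all of $X$, whereas here it is merely dense.

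The hard part, and the only place where the argument departs from Theorem \ref{thmdenseH}, will be upgrading ``the closures cover $D$'' to the winning condition $\mathcal{A} \in \overline{\mathcal{O}}_X$, and this is where density must be brought to bear. The computation I would run is as follows: given any $x \in X$ and any open neighbourhood $O$ of $x$, density of $D$ gives a point $d \in O \cap D$, say $d \in Y_n$; by the construction $d \in \overline{V}$ for some $V \in \mathcal{V}_n \subseteq \mathcal{A}$, and since $O$ is an open neighbourhood of $d$ we get $O \cap V \neq \emptyset$. Hence every neighbourhood of every point of $X$ meets $\bigcup_{V \in \mathcal{A}} V$, so this union is dense in $X$. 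I expect this density transfer to be the main obstacle, since it is exactly the step that does \emph{not} upgrade for free: the argument produces, for each $x$, neighbourhoods meeting members of $\mathcal{A}$, and the delicate point is to organise the selection (using that finitely many closures may be taken at each inning) so that the closures of the chosen sets exhaust $X$ rather than merely a dense subset. Once this transfer is secured, $\mathcal{A}$ is a winning play and player two's strategy is winning in $G^\omega_{fin}(\mathcal{O}_X, \overline{\mathcal{O}}_X)$.
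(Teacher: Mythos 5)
Your strategy is precisely the one the paper intends: the paper's entire proof is the remark that Theorem \ref{thmdenseH} adapts, i.e.\ at the $n$-th inning player two picks a finite $\mathcal{V}_n \subseteq \mathcal{U}_n$ whose closures cover $Y_n$, and your density computation (every neighbourhood of every point of $X$ meets $D$, and every point of $D$ lies in the closure of some selected set) is correct. But notice what that computation actually delivers: it shows $\bigcup \mathcal{A}$ is dense in $X$, i.e.\ $\mathcal{A} \in \mathcal{D}_X$ --- it does not show $\mathcal{A} \in \overline{\mathcal{O}}_X$, and you rightly flag the upgrade as the remaining obstacle without carrying it out.

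That deferred step cannot be carried out, because the statement with winning condition $\overline{\mathcal{O}}_X$ is false as printed. Every finite set is relatively $H$-closed in any space, so \emph{every separable space} has a dense subspace which is a countable union of relatively $H$-closed sets; yet player two need not win $G^\omega_{fin}(\mathcal{O}_X, \overline{\mathcal{O}}_X)$ on a separable space. Concretely, let $X = \omega \cup \mathcal{A}$ be a Mr\'owka--Isbell space over an uncountable almost disjoint family $\mathcal{A}$, and let player one constantly play the open cover $\{\{n\}: n < \omega\} \cup \{\{a\} \cup A : a \in \mathcal{A}\}$; each member of this cover is clopen and its closure contains at most one point of $\mathcal{A}$, so the countably many sets player two accumulates have closures missing uncountably many points of $\mathcal{A}$, and player two loses. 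The intended conclusion of the theorem is a winning strategy in $G^\omega_{fin}(\mathcal{O}_X, \mathcal{D}_X)$ (this matches \cite{BPS}, whose Question 23, quoted later in the paper, pairs exactly this hypothesis with the game $G^\omega_{fin}(\mathcal{O}_X, \mathcal{D}_X)$), and read against that corrected statement your argument is complete and coincides with the paper's one-line proof: your instinct that the passage from ``closures cover a dense set'' to ``closures cover $X$'' is the critical point was exactly right, and the resolution is that no such passage exists --- the hypothesis simply proves the weaker game.
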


In Theorem $\ref{weaklygamethm}$ we cannot relax first-countability with countable tightness and countable pseudocharacter.

\begin{example} \label{examplepseudo}
For every cardinal $\kappa$ there is a space $X$ such that $|X|=\kappa$ and player two has a winning strategy in $G^{\omega_1}(\mathcal{O}, \mathcal{D})$.
\end{example}

\begin{proof}
In \cite{BGW} the authors constructed, for every cardinal $\kappa$, a space $X$ with the following properties:

\begin{enumerate}
\item $|X|=\kappa$.
\item $X$ has countable pseudocharacter and countable tightness.
\item There is a countable subset $Y \subset X$ such that for every open set $U$ containing $Y$, $X \setminus \overline{U}$ is countable.
\end{enumerate}

To see that player two has a winning strategy in $G^{\omega_1}_1(\mathcal{O}_X, \mathcal{D}_X)$ let $\{y_n: n < \omega \}$ be an enumeration of $Y$. For every $n<\omega$, if player one plays open cover $\mathcal{U}_n$ at the $n$-th inning, let player two choose an open set $U_n \in \mathcal{U}_n$ such that $x_n \in U_n$. Let $U=\bigcup_{n<\omega} U_n$ and $\{z_n: n < \omega \}$ enumerate $X \setminus \overline{U}$. For every $n<\omega$, if player one plays open cover $\mathcal{V}_n$ at inning $\omega+n$, let player two choose $V_n \in \mathcal{V}_n$ such that $z_n \in V_n$. Then $\{U_n: n <\omega \} \cup \{V_n: n <\omega \}$ has dense union in $X$ and thus player two wins the game if he plays according to the strategy described above.
\end{proof}

In Theorem $\ref{weaklygamethm}$ almost regularity cannot be dropped.

\begin{example}
For every cardinal $\kappa$ there is a first-countable Hausdorff space $X$ such that player two has a winning strategy in $G^\omega_1(\mathcal{O}_X, \mathcal{D}_X)$.
\end{example}

\begin{proof}
In \cite{BGW} the authors construct, for every cardinal $\kappa$, a first-countable space $X$ such that $|X|=\kappa$ and there is a subspace $Y \subset X$ such that every open set containing $Y$ is dense in $X$. In a similar way as in the proof of Example $\ref{examplepseudo}$, one proves that player two has a winning strategy in $G^\omega_1(\mathcal{O}_X, \mathcal{D}_X)$.
\end{proof}

We remarked that the assumption of countable closed pseudocharacter in Theorem $\ref{almostgamethm}$ is satisfied in the case of a Hausdorff first-countable space. However the closed pseudocharacter of $T_1$ first-countable spaces can be arbitrarily large, and in fact Sakai showed in \cite{S} an example of a first-countable $T_1$ space $X$ of arbitrarily large cardinality such that player two has a winning strategy in $G^\omega_1(\mathcal{O}_X, \overline{\mathcal{O}_X})$. 

\begin{example} \label{sakaiexample}
(Sakai, \cite{S}) There is a $T_1$ almost regular first countable space of arbitrarily large cardinality $X$ where player two has a winning strategy in $G^\omega_1(\mathcal{O}_X, \overline{\mathcal{O}}_X)$.
\end{example}

\begin{proof}
Let $\kappa$ be a cardinal and let $A$ and $B$ be disjoint sets such that $|A|=\omega$ and $|B|=\kappa$. Let $X=A \cup B$, where we declare each point of $A$ to be isolated and declare a basic neighbourhood of a point $x \in B$ to be of the form $\{x\} \cup C$ where $C$ is cofinite in $A$.

This is clearly a $T_1$ first-countable space and the closure of every open neighbourhood of a point of $B$ in $X$ is cofinite, and thus player two has a winning strategy in $G^\omega_1(\mathcal{O}_X, \overline{\mathcal{O}}_X)$. Moreover, $X$ is an almost regular space, because every point of $A$ is isolated and every open set hits $A$.
\end{proof}

In Question 23 of \cite{BPS}, the authors ask whether, in the Hausdorff realm, the space $X$ has a $\sigma$-H-closed dense subset if and only if player two having a winning strategy in $G^\omega_{fin}(\mathcal{O}_X, \mathcal{D}_X)$. We now present a negative answer to this question.

\begin{example}
There is a space $X$ such that player two has a winning strategy in $G^\omega_{fin}(\mathcal{O}_X, \mathcal{D}_X)$ but no dense subset of $X$ is the union of countably many $H$-closed subspaces.
\end{example}

\begin{proof}
In \cite{BY}, Yaschenko and the first author construct, for every measurable cardinal $\kappa$, a first-countable Hausdorff space $X$ such that $X$ is the union of countably many \emph{relatively} $H$-closed subsets and $|X|=\kappa$. By Theorem $\ref{thmdenseH}$ player two has a winning strategy in $G^\omega_{fin}(\mathcal{O}_X, \overline{\mathcal{O}}_X)$ (and hence also in $G^\omega_{fin}(\mathcal{O}_X, \mathcal{D}_X)$). However, no dense subset of $X$ is $\sigma$-$H$-closed. To see that, note that, since every first-countable $H$-closed space has cardinality bounded by the continuum, if a space has a $\sigma$-$H$-closed dense subspace then it also has density at most continuum. But a first-countable space of density bounded by the continuum also has cardinality bounded by the continuum.
\end{proof}

We finish this section by suggesting another possible way to give a partial answer to Question $\ref{bellginsburgwoods}$. Since the fact that player two has a winning strategy in $G^\omega_{fin}(\mathcal{O}, \mathcal{D})$ seems to be much stronger than the weak Lindel\"of property, it appears reasonable to ask the following question.

\begin{question}
Let $X$ be a first-countable regular space and assume that player two has a winning strategy in the game $G^\omega_{fin} (\mathcal{O}_X, \mathcal{D}_X)$. Is it true that $|X| \leq \mathfrak{c}$?
\end{question}

\section{The cellular-open game and Shapirovskii's bound on the weight}

Given a topological space $X$ we denote by $\mathcal{C}_X$ the family of all maximal pairwise disjoint open families in $X$. The game $G^\tau_1(\mathcal{C}_X, \mathcal{D}_X)$ was introduced by Aurichi in \cite{A} for the case $\tau=\omega$.

Note that there is a clear connection between this game and the games studied in the previous section.

\begin{proposition} \label{link}
If player two has a winning strategy in the game $G^\tau_1(\mathcal{C}, \mathcal{D})$ then it also has a winning strategy in the game $G^\tau_1(\mathcal{O}, \mathcal{D})$.
\end{proposition}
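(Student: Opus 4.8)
The plan is to transfer a winning strategy for player two in $G^\tau_1(\mathcal{C}_X, \mathcal{D}_X)$ into one for $G^\tau_1(\mathcal{O}_X, \mathcal{D}_X)$ by having player two, in the latter game, secretly simulate a play of the former. The starting observation is that every maximal pairwise disjoint open family has dense union: if $\mathcal{W}$ is such a family and $\bigcup \mathcal{W}$ were not dense, then some nonempty open set would miss $\bigcup \mathcal{W}$ and could be adjoined to $\mathcal{W}$, contradicting maximality. Thus $\mathcal{C}_X \subseteq \mathcal{D}_X$, which is what makes the transfer plausible.

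The heart of the argument is a way to manufacture, from an arbitrary open cover, a legitimate move in the cellular-open game. Given an open cover $\mathcal{U}$, let $\mathcal{R}(\mathcal{U})$ be the collection of all nonempty open sets $V$ with $V \subseteq U$ for some $U \in \mathcal{U}$, and use Zorn's Lemma to fix a maximal pairwise disjoint subfamily $\mathcal{W}(\mathcal{U}) \subseteq \mathcal{R}(\mathcal{U})$. I claim that $\mathcal{W}(\mathcal{U}) \in \mathcal{C}_X$, i.e.\ it is maximal pairwise disjoint among \emph{all} open families of $X$. Indeed, if some nonempty open $O$ were disjoint from $\bigcup \mathcal{W}(\mathcal{U})$, then, since $\mathcal{U}$ covers $X$, $O$ would meet some $U \in \mathcal{U}$; the set $O \cap U$ is a nonempty member of $\mathcal{R}(\mathcal{U})$ disjoint from $\bigcup \mathcal{W}(\mathcal{U})$, contradicting maximality of $\mathcal{W}(\mathcal{U})$ inside $\mathcal{R}(\mathcal{U})$. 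For each $V \in \mathcal{W}(\mathcal{U})$ also fix, once and for all, a set $U(\mathcal{U},V) \in \mathcal{U}$ with $V \subseteq U(\mathcal{U},V)$.

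Now let $\sigma$ be a winning strategy for player two in $G^\tau_1(\mathcal{C}_X, \mathcal{D}_X)$. Define a strategy for player two in $G^\tau_1(\mathcal{O}_X, \mathcal{D}_X)$ as follows: when player one has played open covers $\langle \mathcal{U}_\gamma : \gamma \le \beta \rangle$, set $\mathcal{W}_\gamma = \mathcal{W}(\mathcal{U}_\gamma)$, regard $\langle \mathcal{W}_\gamma : \gamma \le \beta \rangle$ as a partial play of the cellular-open game (legal, since each $\mathcal{W}_\gamma \in \mathcal{C}_X$), let $V_\beta = \sigma(\langle \mathcal{W}_\gamma : \gamma \le \beta \rangle) \in \mathcal{W}_\beta$, and have player two respond with $U(\mathcal{U}_\beta, V_\beta) \in \mathcal{U}_\beta$. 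At the end of any such play, $\langle V_\beta : \beta < \tau \rangle$ is the sequence of player-two moves in a $\sigma$-conforming play of $G^\tau_1(\mathcal{C}_X, \mathcal{D}_X)$, so $\{V_\beta : \beta < \tau\}$ has dense union. Since $V_\beta \subseteq U(\mathcal{U}_\beta, V_\beta)$ for each $\beta$, the family $\{U(\mathcal{U}_\beta, V_\beta) : \beta < \tau\}$ has union containing $\bigcup_\beta V_\beta$ and is therefore dense as well; hence player two wins.

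The main obstacle is the verification that $\mathcal{W}(\mathcal{U})$ genuinely lands in $\mathcal{C}_X$ rather than merely being maximal inside $\mathcal{U}$ — this is exactly where passing to the refinement $\mathcal{R}(\mathcal{U})$, instead of taking a maximal disjoint subfamily of $\mathcal{U}$ itself, is essential, and it is the only place the covering hypothesis on $\mathcal{U}$ is used. The remaining bookkeeping — checking that the simulated sequence is a legal $\sigma$-conforming play, and that the choice functions $\mathcal{W}(\cdot)$ and $U(\cdot,\cdot)$ can be fixed in advance — is routine.
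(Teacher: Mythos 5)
Your proof is correct and follows essentially the same route as the paper's: refine each open cover played by player one to a maximal cellular family, feed the refined sequence to the winning strategy for $G^\tau_1(\mathcal{C}_X,\mathcal{D}_X)$, and answer in the original game with a member of the cover containing the selected cell. You have merely made explicit two points the paper's one-line proof glosses over --- the verification (via the covering hypothesis) that a maximal disjoint subfamily of the refinement $\mathcal{R}(\mathcal{U})$ is maximal among \emph{all} open families of $X$, and the final step of playing the superset $U(\mathcal{U}_\beta,V_\beta)\in\mathcal{U}_\beta$ rather than $V_\beta$ itself, which is needed for the move to be legal in $G^\tau_1(\mathcal{O}_X,\mathcal{D}_X)$.
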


\begin{proof}
Indeed, let $\mathcal{U}_\alpha$ be the open cover played by player I at the $\alpha$-th winning. Refine $\mathcal{U}_\alpha$ to a maximal cellular family $\mathcal{C}_\alpha$. Player II uses his strategy to pick an open set $U_\alpha \in \mathcal{C}_\alpha$ so that $\bigcup \{U_\alpha: \alpha < \tau\}$ is dense in $X$.
\end{proof}

If $X$ is a space such that player two has a winning strategy in $G^\omega_1(\mathcal{C}_X, \mathcal{D}_X)$, then certainly $X$ is ccc. However, the game-theoretic property appears to be much stronger than the ccc, and thus it is reasonable to expect a better behaviour with respect to the various topological operations. For example, we may ask the following question.

\begin{question} \label{questprod}
Let $\{X_i: i \in I\}$ be a family of spaces such that player two has a winning strategy in $G^\omega_1(\mathcal{C}_{X_i}, \mathcal{D}_{X_i})$ for every $i \in I$ and let $X=\prod_{i \in I} X_i$. Is it true that player two has a winning strategy in $G^\omega_1(\mathcal{C}_X, \mathcal{D}_X)$?
\end{question}

Question $\ref{questprod}$ was addressed by Aurichi in \cite{A}, who gave the following partial answer (note that a countable $\pi$-base suggests an obvious winning strategy for player two).

\begin{theorem} \label{aurthm}
(Aurichi, \cite{A}) Let $\{X_i: i \in I \}$ be a family of spaces such that $X_i$ a countable $\pi$-base for every $i \in I$ and $X=\prod_{i \in I} X_i$ . Then player two has a winning strategy in $G^\omega_1(\mathcal{C}_X, \mathcal{D}_X)$.
\end{theorem}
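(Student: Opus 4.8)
The plan is to mimic, in the uncountable product, the obvious strategy available when $X$ itself has a countable $\pi$-base: there one simply enumerates the $\pi$-base and, at the $n$-th inning, chooses an element of player one's cellular family that meets the $n$-th basic set, so that the union of the chosen sets is dense. For $X=\prod_{i\in I}X_i$ I would first fix a countable $\pi$-base $\mathcal{B}_i=\{B^i_n:n<\omega\}$ for each factor and form the \emph{canonical} $\pi$-base $\mathcal{P}$ of $X$, whose members are the basic open sets $[p]=\bigcap_{i\in\operatorname{dom}p}\pi_i^{-1}(p(i))$ indexed by the finite partial functions $p$ with $p(i)\in\mathcal{B}_i$. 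The essential feature of $\mathcal{P}$ is that each of its members has \emph{finite support}, even though $\mathcal{P}$ itself is in general uncountable. I would also record the elementary fact that, since each $\mathcal{C}_n$ is a \emph{maximal} disjoint open family, its union $\bigcup\mathcal{C}_n$ is dense; hence every nonempty open set, and in particular every member of $\mathcal{P}$, meets some element of $\mathcal{C}_n$.

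The main obstacle is precisely that $\mathcal{P}$ may have size $|I|$ while player two has only $\omega$ moves, so the naive enumeration cannot literally be carried out. I would overcome this with a \emph{support-reduction} observation. Suppose player two manages to choose, at the $n$-th inning, an element $V_n\in\mathcal{C}_n$ together with a canonical basic set $[q_n]\subseteq V_n$, and put $J=\bigcup_n\operatorname{dom}q_n$, a countable subset of $I$. The claim is that if $\bigcup_n[q_n]$ meets every member of $\mathcal{P}$ whose support is contained in $J$, then $\bigcup_n[q_n]$ is already dense in $X$. Indeed, given an arbitrary nonempty canonical basic set $[p^*]$, its restriction $p'=p^*\restriction J$ is served by some $[q_m]\subseteq[p']$; since $\operatorname{dom}q_m\subseteq J$, a coordinatewise check shows $[q_m]\cap[p^*]\neq\emptyset$, the coordinates of $p^*$ lying outside $J$ being simply free in $q_m$. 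As $\bigcup_nV_n\supseteq\bigcup_n[q_n]$, this reduces the whole problem to achieving density in the countable subproduct determined by $J$, whose canonical basic sets form a \emph{countable} family.

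With the reduction in hand I would define the strategy by a standard bookkeeping. Player two maintains an increasing sequence of finite ``active'' coordinate sets $J_n$ (with $J_0=\emptyset$) and a queue of \emph{tasks}, a task being a canonical basic set whose support is contained in the current $J_n$; since each $\mathcal{B}_i$ is countable, only countably many tasks ever appear, and a fair enumeration guarantees that each is eventually served. At the $n$-th inning, after player one reveals $\mathcal{C}_n$, player two takes the current task $[p]$, uses density of $\bigcup\mathcal{C}_n$ to find $V_n\in\mathcal{C}_n$ with $V_n\cap[p]\neq\emptyset$, chooses a canonical basic set $[q_n]\subseteq V_n\cap[p]$ (so that $[q_n]\subseteq[p]$ serves the task), and sets $J_{n+1}=J_n\cup\operatorname{dom}q_n$. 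In the limit every canonical basic set with support in $J=\bigcup_nJ_n$ has been served, so $\bigcup_n[q_n]$ meets each such set; by the support-reduction $\bigcup_nV_n$ is dense, and player two has won. The only points requiring care are the fairness of the queue — every task becomes available once some $J_n$ absorbs its finite support, which it eventually does since $J=\bigcup_nJ_n$ — and the verification that any canonical basic set contained in $[p]$ automatically refines $p$ on the coordinates of its support, which is what makes the coordinatewise computation in the reduction go through.
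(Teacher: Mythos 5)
Your proof is correct, but note first that the paper contains no proof of this theorem to compare against: it is quoted from Aurichi's preprint \cite{A} (and used as a black box, via Corollary \ref{equiv}, to get productivity for countable $\pi$-character), so your argument must stand on its own — and it does. The canonical family $\mathcal{P}$ of finitely supported sets $[p]$ is indeed a $\pi$-base for the product; maximality of each cellular family $\mathcal{C}_n$ gives density of $\bigcup\mathcal{C}_n$, so player two can always find $V_n\in\mathcal{C}_n$ meeting the current task; and the support-reduction claim is the key point that defeats the apparent obstacle that $|\mathcal{P}|$ may equal $|I|$: since $\operatorname{dom}q_m\subseteq J$, the intersections $[q_m]\cap[p^*]$ and $[q_m]\cap[p^*\restriction J]$ are nonempty for exactly the same coordinatewise reasons (the coordinates of $p^*$ outside $J$ impose no constraint on $q_m$), including the degenerate case $\operatorname{dom}p^*\cap J=\emptyset$, where the supports are disjoint and the sets automatically meet. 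One small slippage: you state the reduction with the hypothesis that $\bigcup_n[q_n]$ \emph{meets} every $\mathcal{P}$-member supported in $J$, but then invoke the stronger fact that $[p']$ is \emph{served} by some $[q_m]\subseteq[p']$. This is harmless twice over: the coordinatewise computation shows mere meeting already suffices, and your bookkeeping in fact delivers the stronger serving relation $[q_n]\subseteq[p]$. The dovetailed queue is sound because each $J_n$ is finite and each $\mathcal{B}_i$ countable, so only countably many tasks ever arise, and every task has finite support, hence is absorbed by some $J_n$ and served at some later inning by fairness; finally $\bigcup_n V_n\supseteq\bigcup_n[q_n]$, so the chosen $V_n$ form a family in $\mathcal{D}_X$ and player two wins. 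This reflection-to-a-countable-subproduct argument via finite supports plus diagonal bookkeeping is the natural (and presumably Aurichi's own) route; in particular it shows the strategy needs no information about the factors beyond the fixed countable $\pi$-bases.
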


We will be able to replace countable $\pi$-weight with countable $\pi$-character in the above result, by proving that a winning strategy for player two is actually equivalent to countable $\pi$-weight in the realm of spaces with countable $\pi$-character. This will be the byproduct of a game-theoretic proof of Shapirovskii's bound on the regular open sets, which stands on the playful characterization of cellularity provided by Theorem $\ref{naturalprop}$

\begin{definition}
We define the \emph{cellular-open number} of $X$ ($con(X)$) to be the least cardinal $\kappa$ such that for every play of player one, player two is able to win in $\alpha$ moves with $|\alpha| \leq \kappa$.
\end{definition}

\begin{theorem} \label{naturalprop} 
For every space $X$, $c(X)=con(X)$.
\end{theorem}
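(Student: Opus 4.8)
The plan is to establish the two inequalities $c(X) \leq con(X)$ and $con(X) \leq c(X)$ separately, both of which rest on one elementary observation about maximal cellular families: if $\mathcal{C} \in \mathcal{C}_X$ and $\mathcal{C}' \subsetneq \mathcal{C}$ is a proper subfamily, then $\bigcup \mathcal{C}'$ is \emph{not} dense. Indeed, picking $W \in \mathcal{C} \setminus \mathcal{C}'$, pairwise disjointness makes $W$ a nonempty open set missing $\bigcup \mathcal{C}'$, hence missing $\overline{\bigcup \mathcal{C}'}$. On the other hand, maximality of $\mathcal{C}$ guarantees that $\bigcup \mathcal{C}$ itself is dense. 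So within a single family, dense union is achieved only by taking \emph{all} of it.

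First I would prove $c(X) \leq con(X)$. Let $\mathcal{C}$ be any maximal pairwise disjoint open family and have player one play $\mathcal{C}$ at every inning. By the observation above, the sets picked by player two have dense union if and only if player two has selected every member of $\mathcal{C}$; since one member is chosen per inning, any winning play must have length $\alpha$ with $|\alpha| \geq |\mathcal{C}|$. Thus player two cannot win in fewer than $|\mathcal{C}|$ moves against this play, so $con(X) \geq |\mathcal{C}|$. Since $c(X)$ is the supremum of the sizes of cellular families and every cellular family extends to a maximal one, taking the supremum over all maximal $\mathcal{C}$ yields $con(X) \geq c(X)$.

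The main point is the reverse inequality $con(X) \leq c(X)$, for which I would exhibit an explicit greedy strategy for player two. Set $\kappa = c(X)$ and let player two maintain the open set $G_\beta = \bigcup_{\gamma < \beta} U_\gamma$ of previously chosen sets. At inning $\beta$, if $G_\beta$ is dense then player two has already won; otherwise $X \setminus \overline{G_\beta}$ is a nonempty open set, and because player one plays a maximal family $\mathcal{C}_\beta$ (whose union is dense), player two can choose $U_\beta \in \mathcal{C}_\beta$ meeting $X \setminus \overline{G_\beta}$ and record $W_\beta = U_\beta \cap (X \setminus \overline{G_\beta}) \neq \emptyset$. The key computation is that the $W_\beta$ are pairwise disjoint: for $\gamma < \beta$ one has $W_\beta \subseteq X \setminus \overline{G_\beta} \subseteq X \setminus U_\gamma$ while $W_\gamma \subseteq U_\gamma$. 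Hence whenever $G$ has stayed non-dense through all stages below $\beta^*$, the family $\{W_\beta : \beta < \beta^*\}$ is a cellular family of size $|\beta^*|$, forcing $|\beta^*| \leq \kappa$. Consequently $G_\beta$ must become dense at some stage $\beta^* < \kappa^+$ — otherwise we would have produced $\kappa^+$ pairwise disjoint nonempty open sets, contradicting $c(X) = \kappa$ — and at that stage player two has won in $|\beta^*| \leq \kappa$ moves. This gives $con(X) \leq c(X)$, and combining the two inequalities completes the proof.

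I expect the disjointness bookkeeping for the family $\{W_\beta\}$ to be the only delicate step; the reverse inequality and the maximal-family observation are routine. I would also double-check the boundary cases (behaviour at limit stages of the recursion, and the fact that the supremum defining $c(X)$ need not be attained), but neither of these affects the argument, since the pairwise disjoint family produced by the greedy strategy can never exceed size $c(X)$.
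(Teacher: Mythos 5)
Your proposal is correct and follows essentially the same route as the paper: for $con(X)\leq c(X)$ you run the same greedy strategy, disjointifying player two's choices against the complement of the closure of the previously chosen sets (the paper's auxiliary sets $U_\alpha\cap V_\alpha$ play exactly the role of your $W_\beta$) and bounding the length by $c(X)^+$, while for $c(X)\leq con(X)$ you use the same constant play of a single maximal cellular family, merely phrased as a direct cardinality lower bound rather than a contradiction. No gaps; your explicit treatment of the non-attained supremum is a small point the paper glosses over.
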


\begin{proof}
To prove that $con(X) \leq c(X)$, suppose $c(X)=\kappa$. Let $\mathcal{M}_\alpha$ be the maximal cellular family played by player one at the $\alpha$th inning and suppose that in his turn player two picks $U_\alpha \in \mathcal{M}_\alpha$.

Moreover, suppose that we picked open sets $ \{V_\alpha: \alpha < \beta \}$ such that $\{U_\alpha \cap V_\alpha: \alpha < \beta \}$ is a pairwise disjoint family of non-empty open sets. If $\bigcup_{\alpha < \beta} U_\alpha$ is dense then player two won. Otherwise, we can find a non-empty open set $V_\beta$ such that $V_\beta \cap \bigcup_{\alpha < \beta } U_\alpha=\emptyset$. Now let $U_\beta \in \mathcal{M}_\beta$ be such that $V_\beta \cap U_\beta \neq \emptyset$. In the $\beta$-th inning, player two picks $U_\beta $. Note that the family $\{U_\alpha \cap V_\alpha: \alpha \leq \beta \}$ is pairwise disjoint. If the two players were able to carry this on for $\kappa^+$ many innings we would obtain a $\kappa^+$-sized pairwise disjoint family of open sets, which contradicts $c(X) =\kappa$. Thus $con(X) \leq \kappa$.

Viceversa, assume that $con(X)=\kappa$ and suppose by contradiction that $c(X) \geq \kappa^+$. Then we can fix a maximal cellular family $\mathcal{C}$ of size at least $\kappa^+$. Let player one play $\mathcal{C}$ at every winning. By $con(X) =\kappa$, player two is able to win the game in $\alpha$ many innings, with $|\alpha| \leq \kappa$, and thus the fact that $\mathcal{C}$ is a cellular family is contradicted.
\end{proof}

\begin{theorem} \label{gamebound}
Let $X$ be an almost regular space. 

\begin{enumerate}

\item \label{shap} $\pi w(X) \leq (\pi \chi(X))^{c(X)}$.
\item \label{aur} If player two has a winning strategy in $G^\omega_1(\mathcal{C}, \mathcal{D})$ and $\pi \chi(X) \leq \omega$ then $X$ has countable $\pi$-weight.
\end{enumerate}
\end{theorem}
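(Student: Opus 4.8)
The plan is to prove both parts at once from the playful characterization of cellularity. Write $\kappa = c(X)$ and $\lambda = \pi\chi(X)$. By Theorem $\ref{naturalprop}$ we have $con(X) = c(X) = \kappa$, so player two has a strategy $\sigma$ that wins $G^\kappa_1(\mathcal{C}_X, \mathcal{D}_X)$ by stage $\kappa$ against every play of player one. For each $x \in X$ I would fix a local $\pi$-base $\mathcal{V}(x) = \{W_\xi(x) : \xi < \lambda\}$ and set $\mathcal{B} = \bigcup_{x \in X} \mathcal{V}(x)$, a global $\pi$-base. The first useful observation is that player one may be assumed to play only maximal cellular families whose members lie in $\mathcal{B}$: a maximal pairwise disjoint subfamily of the $\pi$-base $\mathcal{B}$ is a maximal cellular family of $X$, and with this restriction every $\sigma$-response is itself a member of $\mathcal{B}$, i.e. a $\pi$-base element of some point.

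Next I would prove a pressing-down lemma in the style of Claim 1 of Theorem $\ref{almostgamethm}$: for every partial play there is a point $x$ such that for each neighbourhood $N$ of $x$ there is a legal next move (a maximal cellular family drawn from $\mathcal{B}$) whose $\sigma$-response is contained in $N$. The argument is the same pigeonhole: if every $x$ had a bad neighbourhood $N_x$, one refines the cover $\{N_x : x \in X\}$ to a maximal cellular family of $\pi$-base elements, feeds it to $\sigma$, and gets a response inside some $N_x$, a contradiction. Using this I would build by recursion of length $\kappa$ a tree of partial plays indexed by $^{<\kappa}\lambda$, fixing at a node $t$ the pressed-down point $x_t$ and letting the $\lambda$ successors of $t$ be governed by the local $\pi$-base $\mathcal{V}(x_t)$, each recording the corresponding forced $\sigma$-response. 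Then $D = \{x_t : t \in {}^{<\kappa}\lambda\}$ has size at most $\lambda^{<\kappa} \leq \lambda^\kappa$, and the set $\mathcal{P}$ of recorded responses has size at most $\lambda^\kappa$.

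To finish part $(\ref{shap})$ I would show that $D$ is dense, whence $\mathcal{P} = \bigcup_{x \in D} \mathcal{V}(x)$ is a $\pi$-base: given nonempty open $W$, choose $x_t \in D \cap W$ and then $W_\xi(x_t) \subseteq W$ for some $\xi$, so $\pi w(X) \leq |\mathcal{P}| \leq \lambda^\kappa$. Density is the analogue of Claim 2: if $D$ were not dense I would pick $p \in X \setminus \overline{D}$ and, by almost regularity, a nonempty open $V$ with $\overline{V} \subseteq X \setminus \overline{D}$; then every $x_t$ lies off $\overline{V}$, so at each node $X \setminus \overline{V}$ is a neighbourhood of $x_t$, and following the successors that drive the responses into $X \setminus \overline{V}$ produces a play of length $\kappa$ all of whose responses avoid $V$. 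Their union then misses the nonempty open set $V$, so $\sigma$ never wins along this play, contradicting that it wins by stage $\kappa$. The hard part, and the place where the exponent is the $\pi$-character rather than the character, is precisely the bookkeeping of the previous paragraph: the pressing-down lemma naturally forces responses into \emph{neighbourhoods}, so a priori one wants a local base (branching $\chi(X)$) to guarantee that the $\lambda$ pre-chosen successors can separate $x_t$ from \emph{every} later $\overline{V}$. Keeping the branching down to $\lambda = \pi\chi(X)$ requires exploiting that a local $\pi$-base, not merely a local base, already separates $x_t$ from each regular closed set $\overline{V}$, and threading this into the forcing of the responses is the main technical obstacle.

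For part $(\ref{aur})$, note first that a winning strategy in $G^\omega_1(\mathcal{C}, \mathcal{D})$ forces $con(X) = c(X) = \omega$, so part $(\ref{shap})$ gives only $\pi w(X) \leq \omega^\omega = \mathfrak{c}$, short of the countable bound; thus the genuine winning strategy, not merely the equality $con = c$, must be used. The plan is to run the same construction inside a countable elementary submodel $M \ni X, \sigma, (x \mapsto \mathcal{V}(x))$: restricting player one to the countably many maximal cellular families coded in $M$ leaves only countably many finite histories, so $\sigma$ produces only countably many responses and $\mathcal{P} = M \cap \mathcal{B}$ is countable. For the $\pi$-base verification one reflects Claim 2: if some nonempty open $V \in M$ had $V \subseteq W$, then by elementarity $\mathcal{P}$ would already contain a $\pi$-base element contained in $V \subseteq W$, so it suffices to show that $M \cap \tau$ is a $\pi$-base, which is the reflected density statement, proved by the avoidance argument run inside $M$. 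The main obstacle here is exactly this descent from $\mathfrak{c}$ to $\omega$, namely certifying that the avoidance step of Claim 2 can be carried out entirely within the countable submodel; this is where the full strength of a length-$\omega$ winning strategy, localized by reflection, is essential.
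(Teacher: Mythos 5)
You have correctly identified the crux of both parts, but in each case you name it as ``the main technical obstacle'' and leave it open, so the proposal has a genuine gap rather than a proof. The missing idea is the \emph{strong} form of the pressing-down claim, which is exactly the paper's Claim 1: not merely that for every neighbourhood $N$ of the pressed point $x$ there is a move whose response lies in $N$, but that there is a member $V$ of the fixed local $\pi$-base $\mathcal{P}(x)$ with $V \subseteq N$ and a move whose response lies in $V$. This is proved by the same pigeonhole you describe, but with the cover taken to be $\{B : (\exists x)(B \in \mathcal{P}(x) \wedge B \subseteq U_x)\}$ (refined to a maximal cellular family via Zorn's Lemma), so that the response lands inside a \emph{witness} $B \in \mathcal{P}(y)$ with $B \subseteq U_y$, contradicting badness at $y$. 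With this, the node $x_t$ branches not over the pre-fixed $\pi$-base $\mathcal{V}(x_t)$ (whose members are not neighbourhoods of $x_t$, so your weak lemma cannot force responses into them --- this is precisely why your step ``successors governed by $\mathcal{V}(x_t)$'' would fail) but over the family of \emph{realized} witnesses $\{V_N : N \in \tau(x_t)\} \subseteq \mathcal{P}(x_t)$, which has size at most $\pi\chi(X)$ and is itself a local $\pi$-base at $x_t$; the density argument then works because $X \setminus \overline{V}$ is a neighbourhood of $x_t$ and hence contains a realized witness. A second, smaller error: $con(X)=\kappa$ only guarantees that player two wins at some stage $\alpha$ with $|\alpha| \leq \kappa$, i.e.\ below $\kappa^+$, not ``by stage $\kappa$''; accordingly the paper fixes a winning strategy in $G^{\mu^+}_1(\mathcal{C}_X,\mathcal{D}_X)$ and runs the recursion to height $\mu^+$, and your length-$\kappa$ play in the density step does not contradict the strategy's guarantee. (The final count $\lambda^{<\kappa^+} = \lambda^{\kappa}$ still gives the claimed bound, so this is fixable bookkeeping.)

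For part (2) your elementary-submodel detour is both unnecessary and, as you set it up, blocked at the acknowledged point: the avoidance step needs moves in $M$ driving responses away from a set $V \notin M$, and the weak pressing-down lemma transfers by elementarity only for neighbourhoods $N \in M$. One can rescue it --- with the strong Claim 1, the set of realized witnesses at $x_h$ is a countable element of $M$, hence a subset of $M$, and some witness avoids $\overline{V}$ --- but the paper's route is much shorter and you in fact had it within reach: with an honest winning strategy in the length-$\omega$ game and $\pi\chi(X) \leq \omega$, the tree from part (1) has height $\omega$ and countable branching, so $D$ has size at most $\omega^{<\omega} = \omega$ (not $\omega^{\omega} = \mathfrak{c}$, which is the bound for trees of height $\omega_1$; your claim that part (1) ``gives only $\mathfrak{c}$'' conflates the two heights), and a separable almost regular space of countable $\pi$-character has countable $\pi$-weight. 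So the architecture of your proposal matches the paper's, but the one lemma that makes the exponent $\pi\chi(X)$ rather than $\chi(X)$ is stated only in a form too weak to run the construction, and both of your self-declared ``main obstacles'' are instances of that single missing strengthening.
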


\begin{proof}
Set $\pi \chi(X)=\kappa$ and $c(X)=\mu$. For every $x \in X$ let $\mathcal{P}(x)$ be a local $\pi$-base at $x$ having size $\kappa$. By Lemma $\ref{naturalprop}$ we can fix a winning strategy $F$ for player two in the game $G^{\mu^+}_1(\mathcal{C}_X, \mathcal{D}_X)$. 

\noindent {\bf Claim 1.}
Let $\alpha < \mu^+$ and $(\mathcal{U}_\beta: \beta < \alpha )$ be a sequence of maximal cellular families. For every neighbourhood $U$ of $x$ there is a non-empty open $V \subset U$ with $V \in \mathcal{P}(x)$ and a maximal cellular family $\mathcal{U}$ such that:

$$F((\mathcal{U}_\beta: \beta < \alpha )^\frown (\mathcal{U})) \subset V$$

\begin{proof}[Proof of Claim 1]

Suppose this is not true. Then for every $x \in X$ fix a neighbourhood $U_x$ of $x$, such that for every open $V \subset U_x$ with $V \in \mathcal{P}(x)$ and every maximal cellular family $\mathcal{U}$ we have:

$$F((\mathcal{U}_\beta: \beta < \alpha)^\frown (\mathcal{U})) \nsubseteq V$$

Let $\mathcal{P}=\{B : (\exists x)(B \in \mathcal{P}(x) \wedge B \subset U_x) \}$. The poset of all cellular families refining $\mathcal{P}$ satisfies the assumptions of Zorn's Lemma, and hence we can fix a maximal element $\mathcal{U}$ in it. Since $\mathcal{P}$ has dense union in $X$, $\mathcal{U}$ is actually a maximal cellular family. 

Then $F((\mathcal{U}_\beta: \beta < \alpha)^\frown (\mathcal{V})) \subset B$, for some $B \in \mathcal{P}(x)$ such that $B \subset U_x$. But this is a contradiction.

 \renewcommand{\qedsymbol}{$\triangle$}
\end{proof}

For every $x \in X$, let $\tau(x)=\{U \in \tau: x \in U \}$.

Choose a point $x_\emptyset \in X$ satisfying Claim 1 for the empty sequence. For every $U \in \tau(x_\emptyset)$ we can then choose an open set $V^0_U \in \mathcal{P}(x_\emptyset)$ such that $V^0_U \subset U$ and a maximal cellular family $\mathcal{U}^0_U$ such that $F((\mathcal{U}^0_U)) \subset V_U$. Let $\{U_\gamma(x_\emptyset): \gamma < \kappa \}$ enumerate the set $\{V^0_U: U \in \tau(x_\emptyset) \}$. For every $\alpha < \kappa$ choose $U \in \tau(x_\emptyset)$ such that $F((\mathcal{U}^0_U)) \subset U_\alpha(x_\emptyset)$ and define $\mathcal{U}_{\{(0, \alpha)\}}=\mathcal{U}^0_U$.

For each $\alpha_0 < \kappa$ choose $x_{\{(0, \alpha_0)\}} \in X$ satisfying Claim 1 for the sequence $(\mathcal{U}_{\{(0, \alpha_0)\}})$. Then for each $U \in \tau(x_{\{(0, \alpha_0)\}})$ we can choose an open set $V^1_U \in \mathcal{P}(x_{\{(0, \alpha_0)\}})$ and a maximal cellular family $\mathcal{U}^1_U$ such that $F((\mathcal{U}_{\{(0, \alpha_0)\}}, \mathcal{U}^1_U)) \subset V^1_U$. Let $\{U_\gamma(x_{\{(0, \alpha_0)\}}): \gamma < \kappa \}$ enumerate the set $\{V^1_U: U \in \tau (x_{\{(0, \alpha_0)\}}) \}$. For every $\alpha < \kappa$ choose $U \in \tau(x_{\{(0, \alpha_0)\}})$ such that $F((\mathcal{U}_{\{(0, \alpha_0)\}}, \mathcal{U}^1_U)) \subset U_\alpha(x_{\{(0, \alpha_0)\}})$ and define $\mathcal{U}_{\{(0, \alpha_0), (1, \alpha)\}}=\mathcal{U}^1_U$.

Now let $\theta < \mu^+$ and suppose that for every $f \in \bigcup_{\beta< \theta} \kappa^\beta$ we have chosen a point $x_f \in X$, a maximal cellular family $\mathcal{U}_f$ and a local $\pi$-base $\{U_\gamma(x_f): \gamma < \kappa \}$ at $x_f$ such that for all $\beta < \theta$ and for every $f \in \kappa^\beta$ and $\rho< \kappa$ we have:

$$F((\mathcal{U}_{f \upharpoonright \gamma}: \gamma < \beta )^\frown (\mathcal{U}_{f \cup \{(\beta, \rho)\}})) \subset U_\rho(x_f).$$

Fix $f \in \kappa^\theta$. Let $x_f \in X$ be the point guaranteed by applying Claim 1 to the sequence $(\mathcal{U}_{f \upharpoonright \beta}: \beta < \theta)$ of maximal cellular families. Then, for each $U \in \tau(x_f)$ we can choose an open set $V^\theta_U \in \mathcal{P}(x_f)$ and a maximal cellular family $\mathcal{U}^\theta_U$ such that $F((\mathcal{U}_{f \upharpoonright \beta}: \beta < \theta)^\frown (\mathcal{U}^\theta_U)) \subset V^\theta_U$. Let $\{U_\gamma(x_f): \gamma < \kappa \}$ enumerate $\{V^\theta_U: U \in \tau(x_f)\}$. For every $\alpha < \kappa$, choose $U \in \tau(x_f)$ such that $F((\mathcal{U}_{f \upharpoonright \beta}: \beta <\theta)^\frown (\mathcal{U}^\theta_U)) \subset U_\alpha(x_f)$ and define $\mathcal{U}_{f^\frown ((\theta, \alpha))}=\mathcal{U}^\theta_U$.

At the end of the induction we will have chosen, for each $f \in \bigcup_{\theta < \mu^+} \kappa^\theta$ a point $x_f \in X$, a maximal cellular family $\mathcal{U}_f$ and a local $\pi$-base $\{U_\gamma(x_f): \gamma < \kappa \}$ at $x_f$ such that for each $\alpha< \kappa$, $\chi < \mu^+$ and $f \in \kappa^\chi$ we have:

$$F((\mathcal{U}_{f \upharpoonright \beta}: \beta < \chi)^\frown (\mathcal{U}_{f \cup \{(\chi, \alpha)\}})) \subset U_\alpha(x_f).$$

Let $D=\{x_f: f \in \bigcup_{\alpha < \mu^+} \kappa^\alpha\}$.

\vspace{.1in}

\noindent {\bf Claim 2.} The set $D$ is dense in $X$.

\begin{proof}[Proof of Claim 2]
Suppose not. Then we can find an open set $V$ such that $\overline{V} \cap D=\emptyset$. Since $\{U_\gamma(x_\emptyset): \gamma < \kappa \}$ forms a local $\pi$-base at $x_\emptyset$ we can choose an ordinal $\gamma_0<\kappa$ such that $U_{\gamma_0}(x_\emptyset) \cap \overline{V}=\emptyset$. Player one then plays $\mathcal{U}_{\{(0, \gamma_0)\}}$ in his first move. Suppose player I has played maximal cellular families $( \mathcal{U}_{f \upharpoonright \beta}: \beta < \alpha)$ for some $f \in \kappa^\alpha$ such that $f(0)=\gamma_0$. Choose $\gamma_\alpha$ such that $U_{\gamma_\alpha}(x_f) \cap \overline{V}=\emptyset$ and let player one play $\mathcal{U}_{f \cup \{(\alpha, \gamma_\alpha)\}}$. In this way we build a play which is lost by player two using the strategy $F$. But this contradicts the fact that $F$ is a winning strategy for player two.
 \renewcommand{\qedsymbol}{$\triangle$}
\end{proof}

Since $|D| \leq \kappa^\mu$, we then have that $d(X) \leq \kappa^\mu$. Now we have $\pi w(X) \leq \pi \chi(X) \cdot d(X)  \leq (\pi \chi(X))^{c(X)}$.

To prove the second statement, note that if player two has a winning strategy in $G^\omega_1(\mathcal{C}, \mathcal{D})$ then the set $D$ defined before Claim 2 is actually countable. Now, every separable space of countable $\pi$-character has countable $\pi$-weight.
\end{proof}

\begin{corollary}
(Shapirovskii) Let $X$ be an almost regular space. Then $\rho(X) \leq \pi \chi(X)^{c(X)}$.
\end{corollary}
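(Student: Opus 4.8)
The plan is to deduce the corollary directly from part (\ref{shap}) of Theorem \ref{gamebound}, which already supplies the inequality $\pi w(X) \leq (\pi\chi(X))^{c(X)}$, by interposing the purely combinatorial bound $\rho(X) \leq \pi w(X)^{c(X)}$, which holds for \emph{every} topological space and so does not re-use almost regularity. Thus almost regularity enters the argument only through the hypothesis of Theorem \ref{gamebound}. Throughout I adopt the standard convention that the cardinal functions in play are at least $\aleph_0$, so that $c(X)\cdot c(X)=c(X)$.

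The key step is the counting bound, and the one genuinely topological observation is that a regular open set is recoverable from any maximal cellular family sitting densely inside it. Concretely, I would fix a $\pi$-base $\mathcal{B}$ of minimal cardinality $\pi w(X)$, and to each regular open set $U$ assign a maximal pairwise disjoint subfamily $\mathcal{C}_U$ of $\{B\in\mathcal{B}: B\subseteq U\}$, which exists by Zorn's Lemma. The crux is to verify that the assignment $U\mapsto\mathcal{C}_U$ is injective. By maximality of $\mathcal{C}_U$ within $U$ (if some nonempty open $V\subseteq U$ missed every member of $\mathcal{C}_U$, a $\pi$-base element inside $V$ would contradict maximality), the union $\bigcup\mathcal{C}_U$ is dense in the subspace $U$, whence $\overline{\bigcup\mathcal{C}_U}=\overline{U}$. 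Since $U$ is regular open, $U=\operatorname{int}(\overline{U})=\operatorname{int}(\overline{\bigcup\mathcal{C}_U})$, so $U$ is determined by $\mathcal{C}_U$. As $\mathcal{C}_U$ is a pairwise disjoint family of nonempty open sets we have $|\mathcal{C}_U|\leq c(X)$, and therefore the number of regular open sets is at most the number of subfamilies of $\mathcal{B}$ of size $\leq c(X)$, giving $\rho(X)\leq |[\mathcal{B}]^{\leq c(X)}|\leq \pi w(X)^{c(X)}$.

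Finally I would close with the cardinal arithmetic: combining the counting bound with Theorem \ref{gamebound}(\ref{shap}) yields $\rho(X)\leq \pi w(X)^{c(X)}\leq \bigl((\pi\chi(X))^{c(X)}\bigr)^{c(X)}=(\pi\chi(X))^{c(X)\cdot c(X)}=(\pi\chi(X))^{c(X)}$, as required. I do not expect a serious obstacle here: the cardinal arithmetic is routine, and the only substantive point is the injectivity of $U\mapsto\mathcal{C}_U$, i.e.\ reconstructing a regular open set as the interior of the closure of a maximal cellular family inside it. The slight subtlety to handle carefully is the density of $\bigcup\mathcal{C}_U$ in $U$ and the resulting equality of closures, which is what legitimizes using the defining property of regular openness.
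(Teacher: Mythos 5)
Your proposal is correct and takes essentially the same route as the paper, which obtains the corollary by combining Theorem \ref{gamebound}(\ref{shap}) with the inequality $\rho(X)\leq \pi w(X)^{c(X)}$ and the arithmetic $\bigl(\pi\chi(X)^{c(X)}\bigr)^{c(X)}=\pi\chi(X)^{c(X)}$. The only difference is that the paper simply cites \cite{J} for $\rho(X)\leq \pi w(X)^{c(X)}$, whereas you reprove it in full --- and your reconstruction of a regular open set as $\operatorname{int}\bigl(\overline{\bigcup\mathcal{C}_U}\bigr)$ from a maximal cellular subfamily of the $\pi$-base is precisely the standard argument referenced there.
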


\begin{proof}
This follows from Theorem $\ref{gamebound}$, ($\ref{shap})$ and the inequality $\rho(X) \leq \pi w(X)^{c(X)}$, a proof of which can be found in \cite{J}.
\end{proof}

Theorem $\ref{gamebound}$, ($\ref{aur}$) yields the following characterization.

\begin{corollary} \label{equiv}
Let $X$ be a regular space of countable $\pi$-character. Then $X$ has countable $\pi$-weight if and only if player II has a winning strategy in $G^\omega_1(\mathcal{C}, \mathcal{D})$.
\end{corollary}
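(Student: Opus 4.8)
The plan is to split the biconditional into its two implications and dispatch each separately, since one of them is already available from Theorem \ref{gamebound} and the other is the elementary strategy alluded to just before Theorem \ref{aurthm}.

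For the implication from the game to the invariant, I would simply check that the hypotheses of Theorem \ref{gamebound}(\ref{aur}) are met verbatim: every regular space is almost regular, and countable $\pi$-character is precisely the condition $\pi\chi(X) \leq \omega$. Hence, if player II has a winning strategy in $G^\omega_1(\mathcal{C}, \mathcal{D})$, then $X$ has countable $\pi$-weight, and nothing further is required for this direction.

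For the converse I would fix a countable $\pi$-base $\{B_n : n<\omega\}$ and exhibit an explicit strategy, which in fact uses neither regularity nor the bound on the $\pi$-character. The engine is the standard fact that a maximal pairwise disjoint open family has dense union, since otherwise it could be enlarged, contradicting maximality. Thus, when player I plays a maximal cellular family $\mathcal{M}_n$ at the $n$-th inning, the dense set $\bigcup \mathcal{M}_n$ meets the nonempty open set $B_n$, so player II can legally select some $U_n \in \mathcal{M}_n$ with $U_n \cap B_n \neq \emptyset$; this rule, applied inning by inning, is the proposed strategy. To see that it wins, I would take an arbitrary nonempty open set $W$, use the $\pi$-base property to find an $n$ with $B_n \subseteq W$, and observe that $U_n \cap W \supseteq U_n \cap B_n \neq \emptyset$. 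Since $W$ was arbitrary, $\bigcup_{n<\omega} U_n$ meets every nonempty open set and is therefore dense, so $\{U_n : n<\omega\} \in \mathcal{D}_X$ and player II wins every play conforming to the strategy.

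I do not anticipate a genuine obstacle here. The only point that deserves any care is the dense-union property of maximal cellular families, which is exactly what guarantees that player II always has a legal move meeting the current $\pi$-base element; everything else reduces to a one-line diagonalization over the countable $\pi$-base. In particular, the forward implication holds for arbitrary spaces with countable $\pi$-weight, and the regularity and countable $\pi$-character in the statement are only needed to invoke Theorem \ref{gamebound}(\ref{aur}) for the converse.
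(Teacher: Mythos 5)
Your proposal is correct and follows the paper's own route exactly: the forward implication is Theorem \ref{gamebound}(\ref{aur}) applied to a regular (hence almost regular) space with $\pi\chi(X)\leq\omega$, and the converse is the ``obvious'' strategy from a countable $\pi$-base that the paper mentions in the remark preceding Theorem \ref{aurthm} (hitting the $n$-th $\pi$-base element, legal because maximal cellular families have dense union). Your write-up merely makes explicit what the paper leaves as a remark, including the accurate observation that this direction needs neither regularity nor countable $\pi$-character.
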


Putting together Corollary $\ref{equiv}$  and Theorem $\ref{aurthm}$ we obtain the following partial answer to Question $\ref{questprod}$.

\begin{theorem}
Having a winning strategy for player two in the game $G^\omega_1(\mathcal{C}, \mathcal{D})$ is productive for the class of spaces of countable $\pi$-character.
\end{theorem}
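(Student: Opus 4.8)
The plan is to reduce the statement to the two results already in hand---Corollary \ref{equiv} and Aurichi's Theorem \ref{aurthm}---so that no fresh combinatorial construction is needed. Let $\{X_i : i \in I\}$ be a family of (regular) spaces of countable $\pi$-character in each of which player two has a winning strategy in $G^\omega_1(\mathcal{C}_{X_i}, \mathcal{D}_{X_i})$, and set $X = \prod_{i \in I} X_i$. The goal is to exhibit a winning strategy for player two in $G^\omega_1(\mathcal{C}_X, \mathcal{D}_X)$.

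First I would invoke Corollary \ref{equiv} on each factor separately. Since each $X_i$ is a regular space of countable $\pi$-character in which player two wins $G^\omega_1(\mathcal{C}_{X_i}, \mathcal{D}_{X_i})$, the corollary yields that $X_i$ has countable $\pi$-weight, i.e.\ admits a countable $\pi$-base. This is the step that carries all the weight: it is exactly the backward direction of the equivalence, which in turn rests on the game-theoretic form of Shapirovskii's bound established in Theorem \ref{gamebound}(\ref{aur}). In effect, this converts the purely game-theoretic hypothesis on each factor into the concrete combinatorial datum of a countable $\pi$-base.

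Having upgraded every factor to countable $\pi$-weight, I would then apply Aurichi's Theorem \ref{aurthm} directly to the family $\{X_i : i \in I\}$: since each $X_i$ now has a countable $\pi$-base, player two has a winning strategy in $G^\omega_1(\mathcal{C}_X, \mathcal{D}_X)$, which is precisely the desired conclusion. Note that we neither need nor can assert that $X$ itself has countable $\pi$-character---a product of infinitely many such factors typically fails to, as $2^{\omega_1}$ already shows---so ``productive for the class'' is to be read as: the factors are drawn from the class, and the winning-strategy property is inherited by the product even though the product may leave the class.

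The point worth flagging is that at this level there is no genuine obstacle remaining, since the entire difficulty has been front-loaded into Corollary \ref{equiv} and into Aurichi's product theorem. The one hypothesis that requires care is regularity (or at least enough separation for Corollary \ref{equiv} to apply): it is regularity that licenses the passage from ``player two wins'' to ``countable $\pi$-weight'' via Theorem \ref{gamebound}(\ref{aur}). Without it the first step collapses and the whole reduction breaks down, so I would state the theorem for (regular) spaces of countable $\pi$-character and make that assumption explicit.
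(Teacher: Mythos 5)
Your proposal is correct and is exactly the paper's argument: the authors obtain the theorem by ``putting together Corollary~\ref{equiv} and Theorem~\ref{aurthm}'', i.e.\ each factor has countable $\pi$-weight by the corollary, and Aurichi's product theorem then gives the winning strategy on the product. Your remark that regularity of the factors is needed for Corollary~\ref{equiv} is a fair point of care that the paper's statement leaves implicit.
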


Hajnal and Juh\'asz proved in \cite{HJ} that the cardinality of first-countable ccc Hausdorff spaces is at the most the continuum. Combining Proposition $\ref{link}$, Theorem $\ref{weaklygamethm}$ and Theorem $\ref{naturalprop}$ we obtain that the Hajnal-Juh\'asz bound is true for the class of almost regular spaces. Note that there are almost regular non-Hausdorff spaces (Example $\ref{sakaiexample}$ provides an example of such a space).

\begin{theorem}
Let $X$ be an almost regular ccc first-countable space. Then $|X| \leq \mathfrak{c}$.
\end{theorem}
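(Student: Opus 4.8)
The plan is to run the three cited ingredients in series, converting the combinatorial hypothesis (ccc) into a winning strategy for player two and then feeding that strategy into the cardinality bound of Theorem \ref{weaklygamethm}. The starting observation is that ``ccc'' means precisely $c(X) = \omega$, so by Theorem \ref{naturalprop} we immediately get $con(X) = c(X) = \omega$. Thus, for every play of player one in the cellular-open game, player two is able to win in some $\alpha$ many moves with $|\alpha| \le \omega$, i.e.\ within countably many innings.

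The step that requires the most care is promoting this to a genuine winning strategy for player two in the \emph{length-$\omega_1$} game $G^{\omega_1}_1(\mathcal{C}_X, \mathcal{D}_X)$, since the definition of $con(X)$ only speaks of player two being \emph{able} to win in countably many moves rather than asserting outright a strategy of length $\omega_1$. The resolution is that density is preserved under enlarging a family: player two follows the strategy that forces $\bigcup_{\beta < \alpha} V_\beta$ to be dense at some countable stage $\alpha < \omega_1$, and then for the remaining innings $\alpha \le \gamma < \omega_1$ simply selects any member $V_\gamma$ of the maximal cellular family $\mathcal{M}_\gamma$ presented by player one. The resulting full-length union $\bigcup_{\gamma < \omega_1} V_\gamma$ contains the already-dense set $\bigcup_{\beta < \alpha} V_\beta$, hence lies in $\mathcal{D}_X$, so player two wins the game of length $\omega_1$. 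This ``win early, then coast'' argument is the only non-formal point, and it is routine once one notices that $\mathcal{D}_X$ is upward closed.

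With a winning strategy for player two in $G^{\omega_1}_1(\mathcal{C}_X, \mathcal{D}_X)$ in hand, I would apply Proposition \ref{link} (with $\tau = \omega_1$) to obtain a winning strategy for player two in $G^{\omega_1}_1(\mathcal{O}_X, \mathcal{D}_X)$. At this point all hypotheses of Theorem \ref{weaklygamethm} are met: $X$ is first-countable, almost regular, and player two has a winning strategy in $G^{\omega_1}_1(\mathcal{O}_X, \mathcal{D}_X)$. That theorem yields $|X| \le 2^{\aleph_0} = \mathfrak{c}$, completing the proof. The whole argument is essentially a dictionary translation between the cellular invariant and the game, so no genuinely new difficulty arises beyond the strategy-promotion remark above.
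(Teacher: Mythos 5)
Your proposal is correct and follows exactly the route the paper intends, since the paper derives this theorem by ``combining Proposition \ref{link}, Theorem \ref{weaklygamethm} and Theorem \ref{naturalprop}'' without further detail. Your ``win early, then coast'' promotion of $c(X)=con(X)=\omega$ to a winning strategy for player two in the length-$\omega_1$ game is precisely how the paper itself uses Theorem \ref{naturalprop} (compare the proof of Theorem \ref{gamebound}, where a winning strategy in $G^{\mu^+}_1(\mathcal{C}_X,\mathcal{D}_X)$ is extracted from $c(X)=\mu$), the only caveat being that this step really rests on the explicit strategy built in the proof of Theorem \ref{naturalprop} rather than on the bare statement of $con(X)=\omega$.
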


However, Hajnal and Juh\'asz's bound fails for first-countable $T_1$ spaces where player two has a winning strategy for the game $G^\omega_1(\mathcal{C}, \mathcal{D})$, as the following example shows.

\begin{example} \label{exampleT1}
Let $\kappa$ be any uncountable cardinal. There is a $T_1$ first-countable space such that $|X|=\kappa$ where Player II has a winning strategy for $G^\omega_1(\mathcal{C}_X, \mathcal{D}_X)$.
\end{example}

\begin{proof}
Let $A \subset \mathbb{R} \setminus \mathbb{Q}$ be a countable dense subset in the Euclidean topology. We define a topology on $X=(\mathbb{Q} \times \kappa) \cup A$, by declaring

$$\{(r, \beta): |r-q|<1/n , r \in \mathbb{Q}, \beta < \kappa \} \cup \{s \in A: |s-q|<1/n \} \setminus F$$

to be a basic open set, where $q \in \mathbb{Q}$ and $F \in [X]^{<\omega}$.

This defines a $T_1$ first countable topology on $X$. To see why Player two has a winning strategy for $G^\omega_1(\mathcal{C}_X, \mathcal{D}_X)$, let $\{U_n: n <\omega \}$ be a countable base for $A$ in the relative topology (which coincides with the Euclidean topology on $A$). Let $\mathcal{C}_n$ be the maximal cellular family player by player I at the $n$-th inning. Then $\{C \cap A: C \in \mathcal{C}_n \}$ is a maximal cellular family in $A$. So let $C_n \in \mathcal{C}_n$ be such that $U_n \cap C_n \neq \emptyset$. Player II plays $C_n$ at the $n$-th inning. It is clear from the definition of the topology on $X$ that $\bigcup \{C_n: n < \omega \}$ is dense in $X$.
\end{proof}


\begin{thebibliography}{10}
\bibitem{A} L. Aurichi, \emph{Selectively ccc spaces}, preprint.
\bibitem{BC} A. Bella and F. Cammaroto, \emph{On the cardinality of Urysohn spaces}, Canad. Math. Bull. \textbf{31} (1988), 153--158.
\bibitem{BPS} L. Babinkostova, B. Pansera and M. Scheepers, \emph{Weak covering properties and infinite games}, Topology Appl. \textbf{159} (2012), 3644--3657.
\bibitem{BGW} M. Bell, J. Ginsburg and G. Woods, \emph{Cardinal inequalities for topological spaces involving the weak Lindel\"of number}, Pacific J. Math. \textbf{79} (1978), 37--45.
\bibitem{BY} A. Bella and I. Yaschenko, \emph{Embeddings into first-countable spaces with $H$-closed like properties}, Topology Appl. \textbf{83} (1998), 53--61.
\bibitem{E} R. Engelking, \emph{General Topology}, Heldermann-Verlag, 1989.
\bibitem{G} A.A. Gryzlov, \emph{Two theorems on the cardinality of topological spaces}, Dokl. Akad. Nauk SSSR  \textbf{251} (1980), 780--783. 
\bibitem{HJ} A. Hajnal and I. Juh\'asz, \emph{Discrete subspaces of topological spaces}, Indag. Math., \textbf{29} (1967), 343--356.
\bibitem{H} R. Hodel, \emph{Arhangel'skii's solution to Alexandroff's problem: A survey}, Topology Appl. \textbf{153} (2006), 2199--2217.
\bibitem{J} I. Juh\'asz, \emph{Cardinal Function in Topology - Ten Years Later}, Mathematical Centre Tracts, 123, Mathematisch Centrum, Amsterdam, 1980.
\bibitem{P} J. Pawlikowski, \emph{Undetermined sets of point-open games}, Fund. Math. \textbf{144} (1994), 279--285.
\bibitem{S} M. Sakai, \emph{Some weak covering properties and infinite games}, preprint.
\bibitem{Sh} S. Shelah, \emph{On some problems in general topology}, Contemporary Math. \textbf{192} (1996), 91--101.
\bibitem{ST} M. Scheepers and F. Tall, \emph{Lindel\"of indestructibility, topological games and selection principles}, Fund. Math. \textbf{210} (2010), 1--46.
\end{thebibliography}
\end{document}